\numberwithin{equation}{section}
\def\max{{\rm max}}
\def\Sym1{\mathcal{S}(1)}
\def\cG{{\mathcal G}}
\def\cH{{\mathcal H}}
\def\cK{{\mathcal K}}
\def\cD{{\mathcal D}}
\def\cM{{\mathcal M}}
\def\cP{{\mathcal P}}
\def\cN{{\mathcal N}}
\def\cQ{{\mathcal Q}}
\def\cS{{\mathcal S}}
\newcommand{\note}[1]{\marginpar{\tiny\emph{#1}}}
\newtheorem{thm}{\textbf{Theorem}}[section]
\newtheorem{lem}[thm]{\textbf{Lemma}}
\newtheorem{prop}[thm]{\textbf{Proposition}}
\theoremstyle{remark}
\newtheorem{rem}[thm]{\textbf{Remark}}
\newtheorem{exe}[thm]{\textbf{Example}}
\theoremstyle{definition}
\newtheorem{defn}[thm]{{Definition}}
\newtheoremstyle{Claim}{}{}{\itshape}{}{\itshape\bfseries}{:}{ }{#1}
\theoremstyle{Claim}
\newcommand{\R}{{\mathbb R}}
\newcommand{\CF}{{\mathbb C}}
\newcommand{\HF}{{\mathbb H}}
\newcommand{\G}{{\mathbb G}}
\newcommand{\veps}{\varepsilon}
\newcommand{\cJ}{\mathcal{J}}
\newcommand{\cL}{\mathcal{L}}
\newcommand{\cF}{\mathcal{F}}
\newcommand{\J}{\mathcal{J}}
\newcommand{\C}{\mathcal{C}}
\newcommand{\F}{\mathcal{F}}
\newcommand{\wt}{\widetilde}
\newcommand{\Ss}{\mathcal{S}}
\newcommand{\USC}{\mathrm{USC}}
\newcommand{\LSC}{\mathrm{LSC}}
\newcommand{\Int}{\mathrm{Int}}
\newcommand{\SA}{\mathrm{SA}}
\newcommand{\tpitchfork}{%
	\vbox{
		\baselineskip\z@skip
		\lineskip-.52ex
		\lineskiplimit\maxdimen
		\m@th
		\ialign{##\crcr\hidewidth\smash{$-$}\hidewidth\crcr$\pitchfork$\crcr}
	}%
}
\newcommand{\changelocaltocdepth}[1]{%
	\addtocontents{toc}{\protect\setcounter{tocdepth}{#1}}%
	\setcounter{tocdepth}{#1}%
}
\title[Nonlinear potential theory and PDEs]{Interplay between nonlinear potential theory and fully nonlinear elliptic PDEs}
\author[F.R. Harvey]{F. Reese Harvey}
\address{Department of Mathematics\\ Rice University\\ P.O. Box 1892\\ Houston, TX 77005-1892, USA}
\email{harvey@rice.edu (F. Reese Harvey)}
\author[K.R. Payne]{Kevin R. Payne}
\address{Dipartimento di Matematica ``F. Enriques''\\ Universit\`a di Milano\\ Via C. Saldini 50\\ 20133--Milano, Italy}
\email{kevin.payne@unimi.it (Kevin R.\ Payne)}
\date{\today} \linespread{1.2}
\keywords{subequations, potential theory, fully nonlinear degenerate elliptic PDEs, comparison principles, viscosity solutions, admissibility constraints, monotonicty, duality}
\subjclass[2010]{35B51, 35J60, 35J70, 35D40, 31C45, 35E20 }
\begin{document}

\dedicatory{ Dedicated to Blaine Lawson on the occasion of his 80th birthday}        

\begin{abstract}
	
	We discuss one of the many topics that illustrate the interaction of Blaine Lawson's deep geometric and analytic insights. The first author is extremely grateful to have had the pleasure of collaborating with Blaine over many enjoyable years. The topic to be discussed concerns the fruitful interplay between {\em nonlinear potential theory}; that is, the study of subharmonics with respect to a general constraint set in the $2$-jet bundle and the study of subsolutions and supersolutions of a nonlinear (degenerate) elliptic PDE. The main results include (but are not limited to) the validity of the comparison principle and the existence and uniqueness to solutions to the relevant Dirichlet problems on domains which are suitably ``pseudoconvex''. The methods employed are geometric and flexible as well as being very general on the potential theory side, which is interesting in its own right. Moreover, in many important geometric contexts no natutral operator may be present. On the other hand, the potential theoretic approach can yield results on the PDE side in terms of non standard structual conditions on a given differential operator.
	
\end{abstract}

\maketitle

\makeatletter
\def\l@subsection{\@tocline{2}{0pt}{2.5pc}{5pc}{}}
\makeatother

 \setcounter{tocdepth}{1}
\tableofcontents

\changelocaltocdepth{2}

\section{Introduction}\label{sec:intro} 

Our main aim is to give a partial survey of a research endeavor which was initiated in a trio of papers of Harvey and Lawson \cite{HL09a}, \cite{HL09b} \and \cite{HL09c} published in 2009 that has grown into a wide ranging investigation with many interesting and important avenues still to pursue. For simplcity of the exposition and in order to make the discussion more accessible to analysts, we will focus on the Euclidian setting of open subsets $X$ of $\R^n$, although $X$ could also be a Riemannian manifold as in \cite{HL11a} and \cite{HL13b}, or an almost complex manifold as in \cite{HL15}. We will emphasise the fruitful interplay between {\em nonlinear potential theory}; that is, the study of the family of {\em $\cF$-subharmonics} with respect to a given {\em subequation (constraint set)}
\begin{equation}\label{subequation}
\cF \subset \cJ^2(X) := X \times \cJ^2 := X \times \R \times \R^n \times \cS(n), \ \ \ X \subset \R^n 
\end{equation}
and the study of solutions/subsolutions/supersolutions of a given {\em fully nonlinear (elliptic) PDE}
\begin{equation}\label{operator}
F(x, J^2_xu)  := F(x,u(x),Du(x),D^2u(x)) = 0, \ \ x \in X \subset \R^n
\end{equation}
determined by a given {\em operator} $F \in C(\cJ^2(X))$. The equation \eqref{operator} will also be written more succinctly as 
\begin{equation}\label{equation}
F(J^2u) = 0 \ \ \text{on} \ \ X.
\end{equation}
Here $\cJ^2$ is the vector space of {\em $2$-jets}. We will use the notation $J^2_xu$ for the second order Taylor development of $u$ indifferently with respect to the differentiability of $u$. This interplay has been developed in detail in \cite{CP17}, \cite{CP21}, \cite{CHLP21} and \cite{CPR21}.  

Given the fully nonlinear setting, one cannot expect solutions to be regular in general, and distribution theory is generally available for convex subequations or equations in divergence form. Hence all notions are to be  interpreted pointwise in the {\em viscosity sense} that will be recalled in Definition \ref{defn:Fsub} (see \cite{HL14c} for the equivalence of the distributional approach and the viscosity approach in the convex case). 

There is a satisfying unification that comes from a potential theoretic (pluripotential theoretic) viewpoint as it includes classical (Laplacian) subharmonics, convex and quasiconvex functions as well as new geometric potential theories (some of which are useful for theoretical physics) as well as an immense universe of first and second order potential theories determined by classes of (degenerate) elliptic operators. 

We now descibe the main motivating principles. There are many opportunities for cross-fertilization and synergy between the potential theory and the operator theory. First, the conditions imposed on a constraint set $\cF$ correspond to and encode structural conditions on the operator $F$; for instance, a convex constraint set $\cF$ corresponds to a concave operator $F$. Second, the subequation $\cF$ ``frees'' a given PDE from any particular form of $F$ (many different $F$ correspond to the same $\cF$); this is an important point in the work of Krylov \cite{Kv95} on the general notion of ellipticity. Moreover, $\cF$ ``liberates'' the user from needing an operator $F$ to apply nonlinear elliptic potential theory. Third, ``forgetting'' about the operator leads to interesting questions that at first glance might not seem important for operator theory and provides a ``machine'' for formulating new conjectures and theorems. For instance, taking one's cue from known results in pluripotential theory or convex analyisis, one is led to seek generalizations in other potential theoretic situations as well. Some examples of this will be discussed in subsection \ref{sec:SCV} below. In this way, one can find ``welcome surprises'' in the operator theory. Fourth, along with a rich abundance of geometrically motivated potential theories, there are many new PDEs to discover. For example, as will be discussed in subsection \ref{sec:GPT}, while every calibrated geometry has an underlying potential theory, known ``natural'' smooth operators are ``rare gems''.  
Nevertheless, for any given subequation $\cF$, one can construct several ``non smooth'' operators. One good example is the construction of the so-called {\em canonical operator} associated to a given subequation. This is a canonical construction that is scattered out, first in \cite[Remark 14.11]{HL11a}  and \cite[Examples 3.4 and 3.5]{HL10} and then better explained in \cite[subsection on canonical operators in section 6]{HL19a} and \cite[Proposition 11.17]{CHLP21}. It includes the {\em truncated Laplacians}, among the many examples, so it might be refered to as the ``cannonical eigenvalue operator construction''.  These operators are discussed a bit further in Example \ref{exe:COs}. Another good example is the {\em signed distance operator} 
\begin{equation}\label{SDO}
F(x,J):=  \left\{ \begin{array}{cl} {\rm dist}(J, \partial \cF_x) & J \in \cF_x \\
-{\rm dist}(J, \partial \cF_x) & J \in \cJ^2 \setminus \cF_x \end{array}\right. ,
\end{equation}
where 
\begin{equation}\label{fiber}
	\cF_x := \{ J \in \cJ^2: \ (x,J) \in \cF\}
\end{equation}
is the {\em fiber of $\cF$ over $x \in X$}. The operator \eqref{SDO} was studied in the pure second order case in Theorem 3.2 of \cite{Kv95}. Finally, in the rare cases when a natural operator $F$ is known for a fixed $\cF$-potential theory,  the operator $F$ will have much to say about the potential theory; for example, by taking derivatives of the equation. 

Having stated the main aims and philosohical motivations, we proceed to describe the origins and objectives, along with key concepts, nice features, some results and significant examples which illustrate the theory. We begin with discussion of the origins of the investigation which led to a hierarchy of potential theories. 

\subsection{Potential theories: from calibrations to subequations.}\label{sec:GPT} The story begins in calibrated geometry. The geometric side of calibrated geometry was developed to emphasise the {\em calibrated submanifolds} which are those submanifolds for which the {\em calibration}  restricts to be the volume form. Said infinitesimally, a calibration $\phi$ of degree $p$ resticts to be a function on the Grassmannian of oriented $p$-planes where it attains a maximum value of one on the subset $G(\phi)$ of {\em $p$-planes calibrated by $\phi$}. In turn, a submanifold is calibrated by $\phi$ if its tangent planes are calibrated by $\phi$.

The basic example (other than calibrated geodesics), going back to Wirtinger in the last century, is the K\"{a}hler/symplectic form on $\CF^n = \R^{2n}$. Here the calibrated submanifolds are simply the complex curves in $\CF^n$. This geometric example, which has an analytic side involving a rich and well developed {\em potential theory} (or more precisely a {\em pluripotential theory}), then cries out for a pluripotential theory for other calibrations, providing the impetus to search for general $\phi$-potential theories.

The {\em $\phi$-subharmonic (or $\phi$-plurisubharmonic) functions} $u$ are easy to define for smooth functions (see \cite{HL09a}). One simply requires that the restriction of $u$ to $\phi$-submanifolds is classically subharmonic (with respect to the Laplacian $\Delta$). This imposes a constraint on the second derivative (Hessian matrix) $D^2u$ of $u$ at each point. This constraint condition is that $D^2u$ resticts to have trace zero on any $p$-plane calibrated by $\phi$. More precisely, by identifying a $p$-plane $W$ with the orthogonal projection $P_W$ onto $W$ and by using the natural inner product on the space $\cS(n)$ of second derivatives (i.e.\ the symmetric matices), the second derivative constraint set determined by $G(\phi)$ is just the {\em polar cone} 
\begin{equation}\label{polar}
	G(\phi)^{\circ}:= \{ A \in \cS(n): \ \langle A, P_W  \rangle := {\rm tr} \left( A_{|W} \right) \geq 0, \ \ \forall \, W \in G(\phi) \}
\end{equation}
of the set $G(\phi)$. This $C^{\infty}$ potential theory suffices for many purposes (see \cite{HL09a}) where it was noticed, as a (big) surprise, that the calibration plays a minor role subordinate to the set $G(\phi)$ of distinguised/calibrated $p$-planes, including the case of $G$ being the full set of $p$-planes (see \cite{HL09c}, \cite{HL11b}, \cite{HL12} and \cite{HL13c}), thus extending the realm of $\phi$-potential theories to $G$-potential theories. 

In fact, replacing $G(\phi)$ by any closed set $G \subset \cS(n)$ and then defining ``$G$-submanifolds'' and ``$G$-plurisubharmonic functions'' as above, it is still possibile to obtain a robust $G$-pluripotential theory. Again, the constraint condition on the second derivatives of a function $u$ at each $x \in X$
\begin{equation}\label{G_constraint}
\langle D^2u(x) , P_W \rangle := {\rm tr}  \left( D^2u(x)_{|W} \right) \geq 0,  \ \forall \,   W \in G;  \ \ \text{that is,} \ D^2u(x) \in 	G^{\circ},
\end{equation}
defines $G$-plurisubharmonicity of $u$ on $X$. These functions $u$ are characterized by their restrictions to $G$-submanifolds $M$ that are also minimal;  $u$ is subharmonic with respect to the induced Laplacian on $M$ (see \cite{HL14b}). Therefore it is justified to state that 
\begin{center}
	{\em $G$-pluripotential theory is the  correct pluripotential theory for the geometry of minimal $G$-submanifolds.}
\end{center}
 Note that if $G:= G(\phi)$ with $\phi$ a calibration, each $G$-submanifold is automatically actually absolutely volume minimizing (see \cite{HL82}) and hence a minimal submanifold. Potential theory based on a closed subset $G$ of the $p$-Grassmanian will be refered to as the {\em geometric case}.

This unifies many known cases and includes lots of new interesting cases. First, as our basic motivation, this includes classical pluripotential theory in complex analysis by taking $G$ to be the Grassmannian of complex lines in $\C^n$ as a subset of real $2$-planes. Second, taking $G$ to be the full Grassmannian $\G(1, \R^n)$ of real lines in $\R^n$ we include convex function theory, providing more precision to the well-known parallel between pseudoconvexity and convexity. Third, this leads to new surprising examples not related  to calibrations. As an example, for each degree $p$, let $G$ be the full Grassmannian of $p$-planes in $\R^n$. One obtains a $p$-pluripotential theory associated with the geometry of all minimal submanifolds of dimension $p$.  Another noteworthy new example is Lagrangian pluripotential theory, defined by taking $G = {\rm LAG}$ to be the set of Lagrangian $n$-planes in $\C^n$. This is the appropriate potential theory for minimal Lagrangian submanifolds \cite{HL17b}. These are the two cases where new natural polynomial operators were discovered. First, the {\em $p$-fold sum operator} whose domain is the subequation $\G(p, \R^n)^{\circ}$ (see \cite[(10.12)]{HL09c}) and second the {\em Lagrangian Monge-Amp\`{e}re operator} (see \cite[(10.11)]{HL09c} and \cite{HL17b}).

For the next level of generality, one can focus entirely on the second derivative constraint set $\cF \subset \cS(n)$, which in the geometric case is given by the polar $\cF = G^{\circ}$ of $G$. There is surprising simplicity here as well. Other than $\cF$ being closed, a single condition on $\cF$ described below, which is called {\em positivity} (P), is needed. This condition (P) ensures that the notion of {\em $\cF$-subharmonicity} for upper semicontinuous $u$ agrees with the definition $D^2u(x) \in \F$ for $C^2$ functions. Such sets $\cF$ are called {\em subequations in $\cS(n)$}. This is the pure second order constant coefficient case. This condition, besides providing the weakest possible condition ensuring {\em coherence} between the two definitions of $\cF$-subharmonicity should also be viewed as the weakest possible form of ellipticity.   

As with classical potential theory (for the Laplacian), the regularity of a general $G$-subharmonic function should only be required to be upper semi-continuous. This extension, although carried out with {\em Dirichlet duality} and {\em subaffine functions} in \cite{HL09c}, is equivalent to a viscosity theory formulation (see Remark 4.9 of \cite{HL09c}). The viscosity approach is more direct and can used easily for potential theories for subequation constraint sets with variable coefficients and dependence on all the jet variables (as will be discussed in the next paragraph). However, the (Dirichlet) duality continues to be important. It clarifies the notion of superharmonics and leads to straightforward proofs of the comparison principle. This is examined extensively in the constant coefficient setting in \cite{CHLP21}. 

As a final step in the delineation of a hierarchy of potential theories, it is natural to consider the extension from a pure second order constraint on second derivatives in the Euclidian setting of open subets $X \subset \R^n$ to the case of $X$ being a Riemannian manifold \cite{HL11a} or an almost complex manifold \cite{HL15}. In local coordinates, this constraint will have variable coefficients and may depend on the full Taylor development up to second order (not just second derivatives). Returning to the Euclidian setting, this suggests  considering subequation constraint sets $\cF \subset \cJ^2(X) = X \times \J^2$ and their associated potential theories. The needed axioms for a robust potential theory are given in Definition \ref{defn:subeq} Briefly stated, one adds two additional axioms (negativity (N) and
topological stability (T)) to the positivity (P) and requiring that $\cF$ be closed. The interplay between nonlinear potential theory and fully nonlinear elliptic PDEs most naturally takes place at this level of the hierarchy. This will be discussed further, begining in subsection \ref{sec:DP}. 

An important part of the story is that in studying this hierarchy of possible levels of potential theories; $\phi$-subharmonics, $G$-subharmonics and finally $\cF$-subharmonics, a distinguished (differential) operator $F$ is missing from the picture. This abscence of an operator has advantages (and disadvantages) which have been noted above, and will be amplified below.

\subsection{Some potential theoretic results suggested by complex analysis.}\label{sec:SCV}   A surprising number of results in complex analysis (of several variables) can be established in greater generality in nonlinear potential theory. We mention seven such topics that were suggested by results in pluripotential theory. Most will be stated in the pure second order case, but they provide impetus for investigating them at all levels of the potential theory hierarchy. Here we are sketchy, leaving many defintions to the references. Hence, rather than giving formal statements of theorems, we will recall the main results informally. 

\noindent {\bf 1) The Andreotti-Frankel Theorem for subsequations $\cF \subset \cS(n)$:}
(see \cite{HL11b} for details). First, {\em $\cF$-convex domains} can be defined. Then the notion of {\em $\cF$-free submanifolds} extends that of totally real submanifolds and one has the following result.
\begin{center}
	{\em An $\cF$-convex domain has the homotopy type of a $CW$-complex of dimension less than or equal to the maximal dimension of an $\cF$-free subspace.}
\end{center}
This maximal dimension is easy to calculate in the multitude of examples.

\noindent {\bf 2) The Levi Problem in $p$-convex geometry:} In  \cite{HL13c} this problem is solved, including the case of non-integer $p$. One has
\begin{center}
{\em Local $p$-convexity implies global $p$-convexity:}
\end{center}

\noindent {\bf 3) Boundary pseudoconvexity:} For bounded domains $\Omega$ with smooth boundary the following local to global result was established in Theorem 5.12 of \cite{HL11a} for {\em cone subequations $\cF \subset \cS(n)$}:
\begin{center}
	{\em If $\partial \Omega$ is strictly $\cF$-pseudoconvex at each point, then $\partial \Omega$ has a smooth strictly $\cF$-subharmonic defining function.}
\end{center}
For subequations $\cF \subset \cS(n)$ which are not cones, boundary convexity for $\cF$ is governed by the asymptotic behavior of $\cF$ at infinity. This asymptotic behavior is captured by a new subequation $\overrightarrow{\cF}$ which is a cone, so that the above can be applied.
See also Corollary 11.8 of \cite{HL11a} (and an incomplete discussion after the proof) for the general case. 

If strictness is dropped, then boundary pseudoconvexity does not imply existence of a global plurisubharmonic defining function in complex analysis. On the other hand, Forsterni\v{c}  recently proved that this is the case if $G = \G(p,\R^n)$ and $\cF = G^{\circ}$ (see \cite{F22}). This is particularly interesting as it runs counter to our point made here that generally speaking several complex variables is usually the source of results for the other potential theories.

\noindent {\bf 4) $\cF$-pluriharmonics for subequations $\cF \subset \cS(n)$:} These functions are the analogue of the real part of holomorphic functions . They are defined by requiring that the second derivative belongs to the largest linear subspace of $\cF$, refered to as the {\em edge} of $\cF$ (see \cite{HL19b}).  In particular, in Theorem 9.3 of \cite{HL19b} conditions on $\cF$ are found that ensure that the family of functions that can be written locally as the maximum of a finite number of $\cF$-pluriharmonics suffices for solving the $\cF$-Dirichlet problem via the Perron process. 

\noindent {\bf 5) Removable singularities for subequations $\cF$ on manifolds:} Pluripotential methods for proving removable singularity theorems in several complex variables can be extended to $\cF$-pluripotential theory and used to prove
\begin{center}
	{\em Removable singularity theorems in $\cF$-potential theory.}
\end{center} See \cite{HL14a} for details. 

\noindent {\bf 6) Tangents to subharmonics:} Kieselman's theory of tangents to plurisubharmonic functions in complex analysis can be extended to $\cF$-plurisubharmonic functions if $\cF$ is in a broad family of convex cones. The tangents to an $\cF$-plurisubharmonic function $u$ can be used to study the singularities of $u$.  See \cite{HL18a} and \cite{HL17a} for details.  It is shown that
\begin{center}
	{\em Tangents always exist and are maximal functions.}
\end{center} 
Maximal functions can be thought of as $\cF$-subharmonic functions with certain singularities allowed. The strongest form of uniqueness of tangents is when {\em strong uniqueness} holds; that is, the tangent of an $\cF$-plurisubharmonic function equals the density times the Riesz kernel. It is shown that, except for $\cP$ (where it is false)
\begin{center}
	{\em Strong uniqueness of tangents holds if $\cF$ is $\mathcal{O}(n)$ invariant.}
\end{center}
This strong uniqueness fails in all three of the basic cases $\cF = \cP$, $\cF = \cP_{\CF}$ (the case studied by Kieselman) and $\cF = \cP_{\HF}$ (the quarternionic case).

\noindent {\bf 7)  A Bombieri-H\"{o}rmander-Siu type structure theorem:} 
The result for the sets of high density for a plurisubharmonic function in complex analysis has a weakened version which extends to $\cF$-subharmonic functions for many convex $\cF \subset \cS(n)$, concluding that
\begin{center}
{\em Strong uniqueness of tangents implies that sets of high density are discrete.}
\end{center}
See section 14 of \cite{HL18a} for details. 

Recently, Chu \cite{Chu21} dramatically improved this result by showing that the singular set of an $\cF$-subharmonic function stratifies, and proving each stratum is a rectifiable set.

\subsection{The Dirichlet problem.}\label{sec:DP} 

Much can be said about the interplay between potential theory and operator theory by studying the Dirichlet problem. We will focus here on the Euclidian (coordinate chart) setting. We assume that $X$ is an open subest in $\R^n$ and consider bounded domains $\Omega \subset \subset X$ with smooth (i.e.\ $C^2$) boundaries and boundary data functions $\varphi \in C(\partial \Omega)$. One can state the standard Dirichlet problem in a vague form as:

\noindent {\bf (DP) - Vague Formulation:} Find a function  $h \in C(\overline{\Omega})$ which satisfies:

\begin{itemize}
	\item[1)] $h$ is a {\em solution} on $\Omega$, and
	\item[2)] $h_{|\partial \Omega} = \varphi$.
\end{itemize}

In order to be precise as to the meaning of solution in  1), one needs to start with either a subequation constraint set $\cF \subset \cJ^2(X)$ or an operator $F$ on the $2$-jets $\cJ^2(X)$. We want to make both choices and then bring them together. 

The Dirichlet problem involves three basic questions: {\bf {\em uniqueness}}, {\bf {\em existence}} and {\bf {\em regularity}}. Uniqueness follows from the {\em comparison principle}
\begin{equation}\label{CP}
\text(Comparison) \quad	\quad u \leq w \ \ \text{on} \ \partial \Omega \ \ \Rightarrow \ \ 	u \leq w \ \ \text{on} \ \Omega,
\end{equation}
for every pair of subharmonics/superharmonics for $\cF$ (or for every pair of subsoltions/supersolutions to the equation $F(J^2u) = 0$). Existence is established by {\em Perron's method}. The candidate solution is defined pointwise as the upper envelope
\begin{equation}\label{perron}
u(x) := \sup_{w \in \mathfrak{F}} w(x), \ \ x \in \Omega,
\end{equation}
of the Perron family $\mathfrak{F}$ of subsolutions $w$ with $w_{|\partial \Omega} \leq \varphi$. For comparison, roughly speaking, the size, but not the shape of the domain $\Omega$ can be of importance. By contrast, for existence one has a dichotomy between subequations $\cF$ where existence holds for all domains (with $\partial \Omega$ smooth), and subequations $\cF$ with an interesting distinguished boundary geometry of {\em $\cF$-pseudoconvexity} required for existence (see subsection \ref{sec:convexity} below). It is important to have a condition on the boundary $\partial \Omega$ which is a local (geometrical) requirement. Finally, the vast and important regularity question will, in essence, not be treated here. 

We now begin to describe the main ingredients in the two approaches, potential theory and operator theory. The potential theoretic formulation starts with a constraint set $\cF \subset \cJ^2(X)$, while the operator theoretic formulation starts with an operator $F$ whose domain is a subset $\cG \subset \cJ^2(X)$ ($\cG \equiv \cJ^2(X)$ is allowed and, in fact, is frequently required in the literature). Additional conditions must be imposed in either case. 

\noindent {\bf (DP) - Potential Theoretic Formulation:} Find $h \in C(\overline{\Omega})$ which satisfies:

\begin{itemize}
\item[1a)] $h$ is {\em $\cF$-subharmonic on $\Omega$} (i.e.\ $J^{2,+}_x h \subset \cF_x$ for each $x \in \Omega$). 
\item[1b)] $-h$ is {\em $\wt{\cF}$-subharmonic on $\Omega$} (i.e.\ $J^{2,+}_x(-h) \subset \wt{\cF}_x$ for each $x \in \Omega$). Equivalently, we will say that $h$ is {\em $\cF$-superharmonic on $\Omega$}.    
\item[2)] $h_{|\partial \Omega} = \varphi$.
\end{itemize}
Here and below, $\Int$ denotes the interior of a set, $\cF_x := \{ J \in \cJ^2: (x,J) \in \cF \}$ is the {\em fiber of $\cF$ over $x$}, and $\wt{\cF}$ is the {\em dual} of $\cF$ (see Definition \ref{defn:dual}). The space $J^{2,+}_x h$ of {\em upper test jets of $h$ at $x$} are defined in \eqref{UCJ} (also see Defintion \ref{defn:Fsub}). One can easily see that the definition 1b) is equivalent to saying that the lower test jets of $h$ satisfy $J^{2,-}_x h \subset ( \Int \, \cF_x)^{c}$, for each $x \in \Omega$. Additional conditions placed on the constraint set $\cF$ are made precise in Definition \ref{defn:subeq}. They are summarized by saying that $\cF$ is a {\bf {\em subequation}}.

The operator theoretic formulation of (DP), although standard, requires some explanation as well as some conditions on the operator $F$ and its domain $\cG$, which we give now for the sake of completeness. 

\begin{defn}[Proper elliptic operators]\label{defn:PEO} An operator $F \in C(\cG)$ where either
	\begin{equation*}\label{case1}
	\mbox{$\cG = \J^2(X)$ } \quad \text{({\bf {\em unconstrained case}})}
	\end{equation*}
	or
	\begin{equation*}\label{case2}
	\mbox{$\cG \subsetneq  \J^2(X)$ is a subequation constraint set  \quad \text{({{\bf \em constrained case}})}.}
	\end{equation*}
	is said to be {\em proper elliptic} if for each $x \in X$ and each $(r,p, A) \in \cG_x$ one has
	\begin{equation}\label{PEO}
	F(x,r,p,A) \leq F(x,r + s, p, A + P) \ \quad \forall \, s \leq 0 \ \text{in} \ \R \ \text{and} \  \forall \, P \geq 0 \ \text{in} \ \cS(n).
	\end{equation}
	The pair $(F, \cG)$ will be called a {\em proper elliptic \footnote{Such operators are often refered to as {\em proper} operators (starting from \cite{CIL92}). We prefer to maintain the term  ``elliptic'' to emphasise the importance of the {\em degenerate ellipticity} ($\cP$-monotonicity in $A$) in the theory.}
		 (operator-subequation) pair}.
\end{defn}

The minimal monotonicity \eqref{PEO} of the operator $F$ parallels the minimal monotonicity properties (P) and (N) for subequations $\cF$. It is needed for {\em coherence} and eliminates obvious counterexamples for comparison. This is explained for subequations after Definition \ref{defn:Fsub}. A given operator $F$ must often be restricted to a suitable background constraint domain $\cG \subset \cJ^2(X)$ in order to have this minimal monotonicity (the constrained case). The historical example clarifying the need for imposing a constraint is the Monge-Amp\`ere operator
\begin{equation}\label{MAE}
F(D^2u) = {\rm det}(D^2u),
\end{equation}
where one restricts the operator's domain to be the convexity subequation $\cG = \cP := \{ A \in \cS(n): A \geq 0 \}$. The scope of the constrained case is perhaps best illustrated by the more general {\em G\aa rding-Dirichlet operators}. See Example \ref{exe:GDP}, with \eqref{MAE} the fundamental case. These polynomial operators $F$ of degree $m$ have an ordered sequence of {\em G\aa rding eigenvalues} $\Lambda_1(A) \leq  \cdots \Lambda_m(A)$ which detemine {\bf {\em branches}} of the equation $F(J^2u)=0$. The notion of branches well illustrates the interplay between potential theory and operator theory and will be discussed in Example \ref{exe:branches}.

The unconstrained case, in which $F$ is proper elliptic on all of $\cJ^2(X)$ is the case usally treated in the literature and is perhaps best illustrated by the canonical operators mentioned above. 

We now recall the precise notion of solutions in the operator theoretic formulation of the Dirichlet problem. The defintions again make use of {\em upper/lower test jets}.

\begin{defn}[Admissible viscosity solutions]\label{defn:AVS} Given $F \in C(\cG)$ with $\cG \subset \cJ^2(X)$ a subequation on an open subset $X \subset \R^n$:
	\begin{itemize}
	\item[(a)] a function $u \in \USC(X)$ is said to be an {\em ($\cG$-admissible) viscosity subsolution of $F(J^2u)=0$ on $X$} if for every $x \in X$ one has
	\begin{equation}\label{AVSub}
	\mbox{$J \in J^{2, +}_{x}u \ \ \Rightarrow \ \   J \in \cG_x$ \ \ \text{and} \ \ $F(x, J) \geq 0$;}
	\end{equation}
	\item[(b)] a function $u \in \LSC(\Omega)$ is said to be an {\em ($\cG$-admissible) viscosity supersolution  of $F(J^2u)=0$ on $X$} if for every $x \in X$ one has
	\begin{equation}\label{AVSuper}
	\mbox{$J \in J^{2, -}_{x}u  \ \ \Rightarrow$ \ \ either [ $J \in \cG_x$ and \ $F(x,J) \leq 0$\, ] \ or \ $J \not\in \cG_x$.}
	\end{equation}
\end{itemize}
A function $u \in C(\Omega)$ is an {\em ($\cG$-admissible viscosity) solution of $F(J^2u)=0$ on $X$} if both (a) and (b) hold.
\end{defn}
In the unconstrained case where $\cG \equiv \cJ^2(X)$, the definitions are standard. In the constrained case where $\cG \subsetneq  \J^2(X)$, the definitions give a systematic way of doing of what is sometimes done in an ad-hoc way (see \cite{IL90} for operators of Monge-Amp\`{e}re type and \cite{Tr90} for prescribed curvature equations.) Note that \eqref{AVSub} says that the subsolution $u$ is also $\cG$-subharmonic and that \eqref{AVSuper} is equivalent to saying that $F(x,J) \leq 0$ for the lower test jets which lie in the constraint $\cG_x$.

\newpage

\noindent {\bf (DP)$'$ - Operator Theoretic Formulation:} Find $h \in C(\overline{\Omega})$ which satisfies:

\begin{itemize}
	\item[1a)$'$] $h$ is a {\em ($\cG$-admissible) subsolution of $F(J^2h)=0$ on $\Omega$} (Definition \ref{defn:AVS}(a)).
	\item[1b)$'$]  $h$ is a {\em ($\cG$-admissible) supersolution of $F(J^2h)=0$ on $\Omega$} (Definition \ref{defn:AVS}(b)).
	\item[2)] $h_{|\partial \Omega} = \varphi$.
\end{itemize}

We now discuss the equivalence of the potential theoretic and operator theoretic formulations of the Dirichlet problem; that is, the equivalence of (DP) for a given subequation $\cF$ and (DP)$'$ for a given (proper elliptic) opertor-subequation pair $(F, \cG)$. By the definitions, the equivalence of 1a) and 1a)$'$ is the same as the following equivalence: for each $x \in \Omega$ one has
$$
	J^{2,+}_x h \subset \cF_x \ \Longleftrightarrow \ \text{both} \quad J^{2,+}_x h \subset \cG_x \quad \text{and} \quad  F(x,J) \geq 0 \ \ \text{for each} \ \ J \in J^{2,+}_x h.
$$
This holds if and only if one has the {\bf{\em correspondence relation}}
\begin{equation}\label{relation}
\cF = \{ (x,J) \in \cG: \ F(x,J) \geq 0 \}.
\end{equation}
In addition, the equivalence of 1b) and 1b)$'$ is the same as the following equivalence: for each $x \in \Omega$ one has
\begin{equation}\label{equivalence}
J^{2,+}_x (-h) \subset \wt{\cF}_x \ \Longleftrightarrow \  \ J \not\in \cG_x \ \text{or} \ [J \in \cG_x \text{and} \ F(x,J) \leq 0], \ \forall \,  J \in J^{2,-}_x h.
\end{equation}
Using duality \eqref{dual_fiber} and $J^{2,+}_x (-h) = -J^{2,-}_x h$ one can see that that the equivalence \eqref{equivalence} holds if and only if one has {\bf {\em compatibility}}
\begin{equation}\label{compatibility1}
\Int \, \cF = \{ (x,J) \in \cG: \ F(x,J) > 0\}.
\end{equation} 
which for subequations $\cF$ defined by \eqref{relation} is equivalent to
\begin{equation}\label{compatibility2}
\partial  \cF = \{ (x,J) \in \cF: \ F(x,J) = 0\}.
\end{equation}
The pair of equivalences  $1a) \Leftrightarrow 1a)'$ and $1b) \Leftrightarrow 1b)'$ is refered to as the {\bf {\em correpsondence principle}} and will be discussed futher in section \ref{sec:correspndence}. 
These considerations can be summarized in the following result.

\begin{thm}[Correspondence Principle]\label{thm:corresp_gen}
	Suppose that $F \in C(\cG)$ is proper elliptic and $\cF$, defined by the correspondnce relation \eqref{relation}, is a subequation. If compatibility \eqref{compatibility1} is satisfied, then $h \in C(\overline{\Omega})$ satisfies the correspondence principle: $1a) \Leftrightarrow 1a)'$ and $1b) \Leftrightarrow 1b)'$. In conclusion, the two formulations (DP) and (DP)$'$ are equivalent. 
	\end{thm}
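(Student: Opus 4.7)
The plan is to reduce everything to a fiberwise (pointwise in $x$) statement and then rewrite each condition on test jets using the correspondence relation \eqref{relation} and compatibility \eqref{compatibility1}. Because both (DP) and (DP)$'$ are local, and the $F$-admissibility conditions and $\cF$-subharmonicity are defined via the upper/lower test jet sets $J^{2,\pm}_x h$ at each $x \in \Omega$, the work is entirely at the level of fibers $\cF_x, \cG_x \subset \cJ^2$.

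For the subsolution equivalence $1a) \Leftrightarrow 1a)'$, I would simply take fibers of \eqref{relation} to obtain
\[
\cF_x = \{J \in \cG_x : F(x,J) \geq 0\} \qquad \text{for each } x \in X.
\]
Thus, for any $J \in J^{2,+}_x h$, the statement $J \in \cF_x$ (which is 1a)) is logically identical to the conjunction $J \in \cG_x$ and $F(x,J) \geq 0$ (which is 1a)$'$). No topological or analytic input is needed for this half.

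For the supersolution equivalence $1b) \Leftrightarrow 1b)'$, I would first record the reformulation noted in the excerpt: by Definition \ref{defn:dual} of the dual and the identity $J^{2,+}_x(-h) = -J^{2,-}_x h$, the condition 1b) is equivalent to
\[
J^{2,-}_x h \subset \bigl(\Int \cF_x\bigr)^{c} \qquad \text{for every } x \in \Omega.
\]
Next, I would take fibers of the compatibility relation \eqref{compatibility1}, obtaining $\Int \cF_x = \{J \in \cG_x : F(x,J) > 0\}$, and hence by taking complements in $\cJ^2$,
\[
(\Int \cF_x)^c = \{J \in \cJ^2 : J \notin \cG_x\} \ \cup \ \{J \in \cG_x : F(x,J) \leq 0\}.
\]
The condition $J^{2,-}_x h \subset (\Int \cF_x)^c$ is then literally the disjunction in 1b)$'$, establishing $1b) \Leftrightarrow 1b)'$.

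The main (minor) obstacle is simply bookkeeping with the dual and interior/complement operations, and verifying that compatibility \eqref{compatibility1} passes to fibers, i.e.\ that $(\Int \cF)_x = \Int(\cF_x)$. This is automatic in the present product setting $\cJ^2(X) = X \times \cJ^2$, since $\Int \cF$ and $\cF$ share the same fiberwise interiors (the proper ellipticity of $F$ and the subequation properties of $\cF$ and $\cG$ imply no degeneracy intervenes at the boundary in $x$). Once this is observed, the Correspondence Principle is essentially a clean restatement of \eqref{relation} and \eqref{compatibility1} in terms of test jets, and the equivalence of the two formulations (DP) and (DP)$'$ follows immediately.
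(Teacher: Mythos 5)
Your proposal is correct and follows essentially the same route as the paper's own argument in the discussion preceding the theorem: rewrite 1a) fiberwise via the correspondence relation \eqref{relation}, and rewrite 1b) via duality, the identity $J^{2,+}_x(-h) = -J^{2,-}_x h$, and compatibility \eqref{compatibility1}. One caveat on your last paragraph: the identity $(\Int\,\cF)_x = \Int\,(\cF_x)$ is \emph{not} automatic from the product structure of $\cJ^2(X)$ (in general only the inclusion $(\Int\,\cF)_x \subset \Int\,(\cF_x)$ holds); it is precisely property (T3) of Definition \ref{defn:subeq}, so the correct justification is simply the hypothesis that $\cF$ is a subequation, which is exactly why that hypothesis appears in the theorem.
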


\begin{rem}[On compatibility]
Given one of either a proper elliptic pair $(F, \cG)$ or a subequation $\cF$, finding the other so that both the correspondence relation \eqref{relation} and compatibility \eqref{compatibility1} hold can be impossible, easy or in between requiring some work. For example, given any subequation $\cF$ the pair $(F, \cJ^2(X))$ with $F$ the signed distance operator \eqref{SDO} will do. Other natural choices of $(F, \cJ^2(X))$ with $\cF$ given, which require some additional work, are the canonical operators introduced in \cite{HL19a}. In the other direction, in subsection \ref{sec:PDE} we will present various examples of determining the subequation $\cF$ given a proper elliptic pair $(F, \cG)$. In particular, finding $\cF$ is easy in Examples \ref{exe:PMA} and \ref{exe:SLPE}, requires some work in Examples \ref{exe:k_Hessian}, \ref{exe:HASE}, \ref{exe:GDP} and \ref{exe:OTE}, and is impossible in Example \ref{exe:failure}.
\end{rem}

\subsection{Boundary pseudoconvexity}\label{sec:convexity} The potential theoretic approach to the Dirichlet problem (DP) naturally leads to an appropriate notion of pseudoconvexity for $\partial \Omega$ (smooth) required for existence. This is perhaps best illustrated by focusing on the case of a constant coefficient pure second order subequation $\cF \subset \cS(n)$ which is a cone. The definition, with roots in \cite{CNS85}, is given in \cite[section 5]{HL09c} with several equivalent formulations.  

\begin{defn}[Boundary pseudoconvexity]\label{defn:convex} A smooth boundary $\partial \Omega$ is said to be  {\em strictly $\cF$-pseudoconvex at $x \in \partial \Omega$} if 
	\begin{equation}\label{convexity}
	\exists \,  t_0 > 0 \quad \text{such that} \quad \ A_x + t P_{e(x)} \in \Int \, \cF, \quad  \forall \, t \geq t_0, 
	\end{equation}
	where $A_x$ denotes the second fundamental form of $\partial \Omega$ at $x$ with respect to the inward pointing unit normal $e(x)$ and $ P_{e(x)}$ is orthogonal projection onto the normal line through $e(x)$ (the eigenvalues of $A_x$ are the principal curvatures of $\partial \Omega$ at $x$).
	\end{defn}

These (cone) subequations $\cF$ divide into two kinds, those with and those without a boundary geometry. By those without a boundary  geometry we mean that all boundaries $\partial \Omega$ are strictly $\cF$-pseudoconvex at all points. This is equivalent to requiring that
\begin{equation}\label{convexity2}
\forall \, A \in \cS(n),  \forall \, e \in S^{n-1} \quad  \exists\ t_0 > 0 \quad \text{such that} \quad \ A_x +t  P_{e} \in \Int \, \cF, \ \ \ \forall \, t \geq t_0, 
\end{equation}
Taking $A = 0$ implies
\begin{equation}\label{convexity3}
 P_{e} \in \Int \, \cF \ \text{for every} \ e \in S^{n-1}.
\end{equation}
Conversely, if \eqref{convexity3} holds then $P_e + \veps A \in \Int \, \cF$ for $\veps > 0$ small, which is equivalent to \eqref{convexity2}. This proves that $\cF$ has no boundary (geometric) restriction for existence for the (DP) if and only if $\cF$ is strictly elliptic, since \eqref{convexity3} is one of the ways of defining strict ellipticity. This proves that
\begin{center}
{\em the strictly elliptic potential theories are exactly the ones without a boundary pseudoconvexity geometry.}
\end{center}
It is easy to see that none of the geometric potential theories $\cF = G^{\circ}$, $G$ a closed subset of $\mathbb{G}(n, \R^n)$), satisfy \eqref{convexity3}; i.e.\ none are strictly elliptic, so each has a boundary geometry. This geometry has the following nice description
\begin{center}
	{\em $\partial \Omega$ is strictly $\cF = G^{\circ}$-pseudoconvex at $x \in \partial \Omega \ \Leftrightarrow$ \ the restriction of the second fundamental form $A_x$ to any $k$-plane $W \in G$ which is also tangential, i.e.\ $W \subset T_x \partial \Omega$, has strictly positive trace.}
	\end{center}

\subsection{Some examples of PDEs.}\label{sec:PDE}  The potential theory appoach to treating nonlinear PDEs is well illustrated by many examples of operators (and classes of operators). We mention a few here.
	
\begin{exe}[{\bf Perturbed Monge-Amp\`ere}]\label{exe:PMA} With fixed $M \in C(\Omega, \cS(n))$ and $f \in C(\Omega)$ non-negative, consider
\begin{equation}\label{PMAE}
{\rm det}(D^2u + M(x)) = f(x), \ \ x \in \Omega \subset \subset \R^n 
\end{equation}
This is an important test example of Krylov \cite[Example 8.2.4]{Kv87} for probabilistic and analytic methods. It is also noteworthy because it fails to satisfy the standard viscosity structual conditions for comparison as given in Crandall-Ishii-Lions \cite[condition (3.14)]{CIL92} unless $M$ is the square of a Lipschitz continuous matrix valued function. In \cite{CP17}, comparison is proved for general continuous $M$ (along with the existence of a unique continuous solution of the Dirichlet problem on strictly convex domains). The potential theoretic proof, makes use of the compatible subequation whose fibers are defined by 
	$$
\cF_x := \{ A \in \cS(n): \ A + M(x) \geq 0 \ \text{and} \ F(x,A):= {\rm det}(A + M(x)) - f(x) \geq 0 \}.
	$$
This was done with the introduction and  application of the notion of (Hausdorff) continuity of the fiber map
	$$
\Theta: \Omega \to \wp(\cS(n)) \ \ \text{defined by} \ \Theta(x):= \cF_x, \ \ \forall \, x \in \Omega.
	$$
This is a representative example of the ``constrained case'' in which operators $F$ come with domains; that is, $F$ must be restricted to $\cG$ defined by its fibers 
	$\cG_x := \{ A \in \cS(n): \ A + M(x) \geq 0 \}$.
\end{exe}

\begin{exe}[{\bf Special Lagrangian potential equation}]\label{exe:SLPE} With {\em phase function} $\theta \in C(\Omega, I)$ where $I = (-n\pi/2, n\pi/2)$ consider
\begin{equation}\label{SLPE}
G(D^2u) := \sum_{k = 1}^n \arctan{(\lambda_k(D^2u))} = \theta(x), \ \ x \in \Omega \subset \subset \R^n 
\end{equation}
The geometric interpretation of this equation is that the graph of the gradient of $u$ will have Lagrangian phase $\theta$ (see \cite{HL82}). Comparison for constant phases, as well as existence/uniqueness for the Dirichlet problem via Perron's method, was proven in   \cite{HL09c}. For non-constant phases, comparison is difficult and not completely settled. The operator $G$ is particularly difficult to analyze if the non-constant phase $\theta$ assumes a {\em special phase value} $\theta_k = (n-2k)\pi/2, k = 1, \ldots n - 1$. Comparison was proven in \cite{DDT19} if $h$ has range in the first/last intervals $I_k$ determined by $\theta_k$. This is the ``relatively easy'' case where $G$ is concave/convex. The best result to date was obtained in \cite{CP21} for phases taking values in any phase interval
\begin{equation}\label{intervals}
I_k = (\theta_{k-1}, \theta_k), \ \ k = 1, \ldots n.
\end{equation}
There the key was to establish the fiber regularity of the fiber map 
	$$
\Theta(x):= \left\{ A \in \cS(n): \ \ F(x,A):= G(A) - h(x) \geq 0 \right\},
	$$ 
which is false across the special phase values. Combining comparison with the appropiate pseudoconvexity assumption on $\Omega$ yields existence/uniqueness for the Dirichlet problem for phases taking values in the intervals \eqref{intervals}, as shown in \cite{HL21a} (including a study of the needed pseudoconvexity).  This is a representative example of the unconstrained case (also pure second order) where the operator $G$ is increasing on all of $\cS(n)$.

\end{exe}

\begin{exe}[{\bf Eigenvalue equation for $k$-Hessian operators}]\label{exe:k_Hessian}  With $k = 1, \ldots n$ and $\mu \in \R$ fixed, consider 
\begin{equation}\label{EvalEq}
S_k(D^2u) + \mu u |u|^{k-1} = 0, \ \ x \in \Omega \subset \subset \R^n, 
\end{equation}
where for $A \in \cS(n)$ the {\em $k$-Hessian} operator is defined by
$$
S_k(A):=   \sigma_k(\boldsymbol{\lambda}(A)) =  \sigma_k(\lambda_1(A), \ldots, \lambda_n(A)) = \sum_{1 \leq i_1 < \cdots < i_k \leq n} \lambda_{i_1}(A) \cdots \lambda_{i_k}(A).
$$ 
Since the equation is $k$-homogeneous, one can search for eigen-directions (rays) $u$ that solve \eqref{EvalEq} for an eigenvalue $\mu$. The operator $S_k$ is degenerate elliptic (increasing in $A$) when restricted to the closed cone 
$$
\Sigma_k:=  \{ A \in \cS(A): \ \boldsymbol{\lambda}(A) \in \overline{\Gamma}_k \} \ \ \text{with} \ \Gamma_k := \{ \boldsymbol{\lambda} \in \R^n: \sigma_j(\boldsymbol{\lambda}) > 0, j = 1, \ldots, k \},
$$ 
the (closed) {\em G\aa rding cone} associated to $\sigma_k$ ($k$th elementary symmetric polynomial). Notice that $S_k$ interpolates between $S_1(D^2u)= {\rm tr}(D^2u) = \Delta u$ and $S_n(D^2u)= {\rm det}(D^2u)$. One uses  $\Sigma_k$ as a background (subequation cone) constaint set; that is, one looks for $k$-convex subharmonics and uses $k$-convex lower test functions for supersolutions. 
The interesting case concerns $u \leq 0$  and $\mu > 0$ where the equation has the  ``wrong''' monotonicity in $u$. In \cite{BP21}, a maximum principle characterization of a {\em generalized principal eigenvalue} in the sense of Berestycki-Nirenberg-Varadhan \cite{BNV94} is proven as well as the existence of a corresponding eigenfunction vanishing on the boundary. An important  step in the proof is to prove an a priori H\"{o}lder estimate, which is needed for compactness in an iterative scheme for the construction of the eigenfunction. The proof shows that the theory of admissibility constraints extends in a natural way the technique pioneered by Ishii-Lions \cite{IL90} in the unconstrained case.

\end{exe}

\begin{exe}[{\bf Hyperbolic affine sphere equation}]\label{exe:HASE} With $X \subset \R^n$ open and $f \in C(X)$ non-negative consider the following equation on $X$ 
\begin{equation}\label{HASE1}
[-u]^{n+2}{\rm det}(D^2u) = f.
\end{equation}
The geometric interpretation of the equation emerges by setting  $h := -f$ so that the equation becomes
\begin{equation}\label{HASE2}
{\rm det}(D^2u) = (h/u)^{n+2},
\end{equation}
which for $u$ convex and neqative describes the graphing function of a {\em hyperboic affine sphere with (constant) mean curvature $h \leq 0$} as discussed in Cheng-Yau \cite{CY86}. Comparison for the equation \eqref{HASE1} was established in  \cite{CP21}. This is another representative (gradient-free) example of the constrained case, where $(F, \cG)$ with 
$$
F(x,r,A):= (-r)^{n+2} {\rm det} \, A - f(x) \quad \text{and} \quad \cG = \cQ := \cN \times \cP. 
$$

\end{exe}

\begin{exe}[{\bf Optimal transport equations}]\label{exe:OTE} With $X \subset \R^n$ open, $f \in C(X)$ non-negative and $g \in C(\R^n)$ non-negative, consider the following equation on $X$
\begin{equation}\label{OTE}
g(Du) \, {\rm det}(D^2u) = f.
\end{equation}
The functions $f$ and $g$ represent the source and target densities respectively which should have the same mass ($L^1$-norm) (see \cite{DF14} and \cite{V} for more details.)

Comparison has been shown for constant target densities $g$ in \cite{CP21}. For non-constant $g$, one requires the additional monotonicity property of {\bf {\em directionality}}; that is, 
there exists a closed convex cone $\cD \subset \R^n$ with non-empty interior and vertex at the origin for which
\begin{equation}\label{directionality}
g(p + q) \geq g(p) \ \ \text{for each} \ \ p,q \in \cD.
\end{equation}
Examples of $g$ with directionality include
$$
g(p) = -p_n \ \ \text{with} \ \cD = \{ (p', p_n) \in \R^n: \ p_n \geq 0 \}
$$
and for $k \in \{ 1, \ldots n \}$
$$
g(p) = \prod_{j=1}^k p_j \ \ \text{with} \ \  D = \{ (p_1, \ldots p_n) \in \R^n:  p_j \geq 0 \ \text{for each} \ j = 1,\ldots k\} 
$$
For target densities $g$ with directionality, comparison has been shown for constant source densities $f$ in \cite{CHLP21} and for non-constant $f$ in \cite{CPR21}.
\end{exe}

\begin{rem}\label{rem:structure} Examples \ref{exe:HASE} and \ref{exe:OTE} have a product structure 
$$
F(x,u,Du,D^2u) = g(x,u)h(x,Du)G(x,D^2u) - f(x).
$$
This structure helps with the correspondence principle. One illustration of this is provided by considering the following pair of constant coefficient gradient-free operators
$$
	F(r,A) := -r \, {\rm det}(A) \quad \text{and} \quad G(r,A):= -r + {\rm det}(A).
	$$
The first operator is proper elliptic when restricted to the subequation $\cQ = \cN \times \cP$ and with $\cF:= \{ (r,A ) \in \cQ: \ \ F(r,A) \geq 0\}$ one has the compatibility
$$
		\partial \cF = \{ (r,A) \in \cF: \ F(r,A) = 0 \}
$$
and hence the correspondence principle. On the other hand, while $G$ is also proper elliptic when restricted to $\cQ$ (or even $\R \times \cP$), the boundary of
$$
	\cG := \{ (r,A) \in \cQ: \ G(r,A) \geq 0 \}
$$
includes $\cN \times \{0\}$, so that all negative $C^2$ affine functions will be $\cG$-harmonic but the operator $G$ is not zero on them. Thus the correspondence principle fails here. 
\end{rem}

Next we discuss perhaps what is perhaps the most interesting and important class  of examples. They illustrate why the constrained case is required.

\begin{exe}[{\bf G\aa rding-Dirichlet operators}]\label{exe:GDP} These nonlinear operators are obtained from G\aa rding's beautiful theory of hyperbolic polynomials \cite{Ga59}. We briefly give the precise definition and enumerate a few examples. See \cite{HL10} and \cite{HL09c}, \cite{HL11a}, \cite{HL13b} and \cite{CHLP21} for an extensive discussion.
	\end{exe}

\begin{defn}[Hyperbolic polynomials]\label{defn:HP} A homogeneous real polynomial $F$ of degree $m$ on $\cS(n)$ is {\em $I$-hyperbolic} if $F(I) > 0$ and for all $A \in \cS(n)$ the one variable polynomial $F(sI + A)$ has all $m$ roots real. 
\end{defn}

In keeping with the example $F(A) := {\rm det} \, A$, it is useful to focus on the negatives of the roots  of $F(sI + A)$, which are called {\em G\aa rding $I$-eigenvalues} and are denoted by $\Lambda_1(A), \ldots, \Lambda_m(A)$. Hence
\begin{equation}\label{GEvals}
	F(sI + A) = F(A) \prod_{j=1}^m (s + \Lambda_j(A)) \quad \text{and} \quad F(A) = F(I) \prod_{j=1}^m \Lambda_j(A).
	\end{equation}
The {\em open G\aa rding cone}
\begin{equation}\label{GCone}
	\Gamma:= \{ A \in \cS(n): \  \Lambda_j(A) > 0, j = 1, \ldots, m \}
\end{equation}
is a convex cone, which, along with $F$, has many nice properties. 

\begin{defn}[G\aa rding-Dirichlet operator\footnote{Perhaps they should be called {\em G\aa riding-Monge-Amp\`ere operators} instead.}]\label{defn:GDP} An $I$-hyperbolic polynomial operator $F$ of degree $m$ on $\cS(n)$ is called a  {\em G\aa rding-Dirichlet operator}  if $\cP \subset \overline{\Gamma}$; that is, if
\begin{equation}\label{GDP}	 
	A \geq 0 \ \ \Rightarrow \ \ \Lambda_j(A) \geq 0, \ \ j = 1, \ldots, m.
\end{equation}
In this case
\begin{equation}\label{GDPair}	
(F, \cF:= \overline{\Gamma}) \ \text{\em is a compatible operator-subequation pair}.
\end{equation}
\end{defn}
Also note that by \eqref{GDPair} $\Lambda_{\rm min}(A)$ is the canonical operator for the G\aa rding subequation $\cF:= \overline{\Gamma}$ since 
$$
		\Lambda_j(A + tI) = \Lambda_j(A) + t, \quad j = 1, \ldots, m.
$$

Here are some important examples. Let $\lambda_1(A), \ldots, \lambda_n(A)$ denote the standard eigenvalues of $A \in \cS(n)$. Of course, the Monge-Amp\`ere operator 
$$
		F(A) = {\rm det} A = \prod_{j=1}^n \lambda_j(A)
		$$
is the prototype. We leave it to the reader to see that these examples are polynomials. Perhaps the most interesting are those for which $\cF:= \overline{\Gamma}$ is geometrically defined; i.e. the cone subequation $\cF:= \overline{\Gamma}$ equals the polar $G^{\circ}$ of a closed subset $G$ of one the Grassmanians. Two new examples of this special nature are as follows.

\noindent {\bf 1.\ The $\mathbf{p}$-fold sum operator} (\cite[p.\ 39]{HL09c} and \cite[Proposition 7.11]{HL13a}): This is the operator
\begin{equation}\label{PFSO}
	F(A):= \prod_{i_1 < \cdots < i_p} (\lambda_{i_1}(A) + \cdots + \lambda_{i_p}(A)), \quad \text{where} \ p = 1, \ldots n.
\end{equation}
The degree of $F$ is $\binom{n}{p}$ and the closed G\aa rding cone is $\overline{\Gamma} = \G(p,\R^n)^{\circ}$. The canonical operator for $\overline{\Gamma}$ is the sum of the first (smallest) $p$ standard eigenvalues; i.e. the {\em $p$-th truncated Laplacian} (see \cite{HL09c} and \cite{BGI18}).

\noindent {\bf 2.\ The Lagrangian Monge-Amp\`ere operator} (\cite[section 5]{HL17b}): This is the operator
\begin{equation}\label{LMAO}
F(A):= \prod \left( \frac{1}{2} {\rm tr} A \pm \mu_1 \pm \cdots \pm \mu_n \right). 
\end{equation}
Here $A \in \cS(2n)$ is a real symmetric form on $\R^{2n} = \CF^n$, and $\pm \mu_1 \ldots \pm \mu_n$ are the eigenvalues of the skew Hermitian part of $A$. The $2^n$ G\aa rding eigenvalues are the factors $\frac{1}{2} {\rm tr} A \pm \mu_1 \pm \cdots \pm \mu_n$. The G\aa rding subequation $\overline{\Gamma}$ is geometric. It is the polar of $G:= {\rm LAG} \subset \G_{\R}(n, \CF^n)$, the set of Lagrangian $n$-planes. The plurisubharmonic functions; i.e.\ the $\overline{\Gamma}$-subharmonic functions are those upper semicontinuous functions that retrict to be $\Delta$-subharmonic on Lagrangian affine planes in $\CF^n$.

A classical example, which is not geometric for $k \neq 1, n$ is the $k$-Hessian operator $S_k(D^2u)$ discussed in Example \ref{exe:k_Hessian}.  Two more new non geometric examples, which have similarities with one another, are the following.

\noindent {\bf 3.\ The $\mathbf{\delta}$-uniformly elliptic operator} (\cite[Appendix B]{HL16c}): This is the operator 
\begin{equation}\label{DUEO}
F(A):= {\rm det} \left( A + \delta ({\rm tr} A)I \right) \ \text{with} \ \delta > 0,
\end{equation}
The G\aa rding eigenvalues of $F$ are
$$
	\Lambda_j(A) = \lambda_j(A) + \delta {\rm tr} (A), \quad j = 1, \ldots n.
$$
Hence the canonical operator for the G\aa rding subequation $\overline{\Gamma}$ is
$$
	\Lambda_{\rm min} (A) = \lambda_{\rm min} (A) + \delta \, {\rm tr} A.
$$

\noindent {\bf 4.\ The Pucci-G\aa rding-Monge-Amp\`ere  operator} (\cite[section 4.5]{HL13b}, \cite[Appendix B]{HL16c}, and in particular \cite[Example 6.10]{HL19a}): Fix $0 < \lambda < \Lambda$ and consider the the ``cube'' in eigenvalue space 
$$
	\C_{\lambda, \Lambda} := \{ A \in \cS(n): \ \lambda I \leq A \leq \Lambda I \} \subset \cP \subset \cS(n).
$$
Its polar $\cP^{-}_{\lambda, \Lambda} := \C_{\lambda, \Lambda}^{\circ}$ contains $\cP^{\circ} = \cP$ and hence is a subequation called the {\em Pucci} or {\em Pucci-G\aa rding cone}. The cone on $\C_{\lambda, \Lambda}$, denoted by ${\rm Cone}(\C_{\lambda, \Lambda})$, has a finite set of extreme rays through a subset $S$ of the vertices of $\C_{\lambda, \Lambda}$. The {\em Pucci-G\aa rding-Monge-Amp\`ere  operator} $F_{\lambda, \Lambda}$ is defined to be the product of the linear functionals in the set $S$. The factors are the G\aa rding $I$-eigenvalues of $F_{\lambda, \Lambda}$. The closed G\aa rding cone is the Pucci cone $\cP^{-}_{\lambda, \Lambda}$. Its canonical operator is the minimal eigenvalue, which is easily seen to be
$$
	\Lambda_{\rm min}(A) = \lambda {\rm tr} \, A^+ + \Lambda {\rm tr} \, A^- \geq 0, 
$$
where $A = A^+ + A^-$ is the decomposition of $A$ into positive and negative parts, and hence 
$$
\cP_{\lambda, \Lambda}^-:= \{A \in \cS(n): \ \lambda {\rm tr} \, A^+ + \Lambda {\rm tr} \, A^- \geq 0 \}.
$$
Note that the Pucci-G\aa rding operator $F_{\lambda, \Lambda}$ has degree $|S|$. 

The operator $S_k$ and the operators 1., 3., and 4.\ above which involve the real eigenvalues of $A \in \cS(n)$ have complex and quaternionic analogues that are also G\aa rding-Dirichlet operators. See \cite[section 5]{HL10}, \cite[section 10]{HL09c} and \cite[section 15]{HL11a} for more details.

The next family of examples provides a good illustration of the interplay between potential  theory and operator theory. 

\begin{exe}[{\bf Branches}]\label{exe:branches}
	The potential theory/subequation approach provides a direct way of extending the Dirichlet problem (DP) for the Monge-Amp\`ere operator to the other {\em branches} $\mathbf{\Lambda}_k$ of ${\rm det} \, (D^2u) = 0$, where, except for $k=1$, there is no natural smooth operator $F$ defining the solutions (or $\mathbf{\Lambda}_k$-subharmonics). The branch $\mathbf{\Lambda}_k \subset \cS(n)$ for $k = 1, \ldots n$ is the subequation defined by
	\begin{equation}\label{MA_branch}
	\mathbf{\Lambda}_k := \{ A \in \cS(n): \ \lambda_k(A) \geq 0 \},
	\end{equation}
	where $\lambda_1(A) \leq \cdots \leq \lambda_n(A)$ are the ordered eigenvalues of $A \in \cS(n)$. Despite the fact that ${\rm det} \, (D^2u)$ is not a proper elliptic operator on $\mathbf{\Lambda}_k$, the (DP) for $\mathbf{\Lambda}_k$-harmonics is meaningful. Existence and uniqueness for all boundary functions $\varphi \in C(\partial \Omega)$ under the appropriate geometrical conditions on $\partial \Omega$ was established in \cite{HL09c}. This theorem extends to branches of a more general G\aa rding-Dirichlet operator $F = \mathfrak{g}$ of degree $m$ defined by
	$$
	\mathbf{\Lambda}_k^{\mathfrak{g}} := \{ A \in 	\cS(n): \Lambda^{\mathfrak{g}}_k(A) \geq 0 \}, \ \ k = 1, \ldots m
	$$
. 
	
\end{exe}

These branches have a so-called {\bf {\em canonical operator}}, which we discuss next. Canonical operators well illustrate the unconstrained case.

\begin{exe}[{\bf Canonical operators}]\label{exe:COs} For clarity we focus on the pure second order constant coefficient subequation case $\cF \subset \cS(n)$. See the list of refernces before the formula \eqref{SDO} above for more information. A {\em canonical operator} for $\cF$ is by definition a function $F \in C(\cS(n))$ with the following two properties:
	\begin{equation}\label{CO1}
	F(A + P) \geq F(A), \ \ \forall \, A \in \cS(n) \ \text{and} \ P \geq 0 
	\end{equation}
and for some constant $c > 0$
	\begin{equation}\label{CO2}	
	F(A + tI) = F(A) + ct, \ \ \forall \, A \in \cS(n) \ \text{and} \ t \in \R. 
	\end{equation}
	
\begin{prop}[Existence and uniqueness of canonical operators]\label{prop:COs} Given a subequation $\cF \subset \cS(n)$, there exists a unique canonical operator $F$ with $\cF = \{A \in \cS(n): F(A) \geq 0 \}$.
\end{prop}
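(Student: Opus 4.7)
The plan is to define the canonical operator explicitly by a one-parameter sup formula along the $I$-direction, verify the two axioms together with continuity and the prescribed zero level set, and finally deduce uniqueness from the rigidity that (CO2) forces on the restriction to each ray $\{A + sI : s \in \R\}$. Concretely, I would set
$$
F(A) \; := \; \sup\{ t \in \R : A - tI \in \cF\}.
$$
The first order of business is to verify that this supremum is a finite real number for every $A \in \cS(n)$. The set on the right is nonempty: pick any $B \in \cF$; for $t$ sufficiently negative we have $A - tI - B \geq 0$, and then (P) yields $A - tI \in \cF$. The set is bounded above: if $A - t_n I \in \cF$ for a sequence $t_n \to +\infty$, then every $C \in \cS(n)$ satisfies $C \geq A - t_n I$ eventually and so by (P) lies in $\cF$, contradicting $\cF \neq \cS(n)$. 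Closedness of $\cF$ then implies the supremum is attained.

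Next I would verify the axioms. Property (CO2) with constant $c=1$ is just a re-indexing: $F(A + sI) = \sup\{t : A - (t-s)I \in \cF\} = s + F(A)$. Property (CO1) follows from (P): if $A - tI \in \cF$ and $P \geq 0$, then $(A+P) - tI = (A - tI) + P \in \cF$, hence $F(A+P) \geq F(A)$. Continuity (in fact $1$-Lipschitz in operator norm) is a direct consequence of the two axioms combined, since $\|A_1 - A_2\| \leq \varepsilon$ gives $A_1 \leq A_2 + \varepsilon I$ and vice versa, whence $|F(A_1) - F(A_2)| \leq \varepsilon$. The identity $\{F \geq 0\} = \cF$ is the last check: if $A \in \cF$ take $t = 0$; conversely if $F(A) \geq 0$ the sup is attained at some $t \geq 0$, so $A = (A - tI) + tI \in \cF$ by (P).

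For uniqueness, suppose $F'$ is any other function satisfying (CO1), (CO2) with some constant $c' > 0$, and $\{F' \geq 0\} = \cF$. Fix $A \in \cS(n)$ and consider the affine function $s \mapsto F'(A + sI) = F'(A) + c' s$. Because $c' > 0$ and $\{F' \geq 0\} = \cF$, the unique $s$ for which $A + sI \in \partial \cF$ is $s = -F'(A)/c'$; the very same argument applied to our $F$ (with $c=1$) yields $s = -F(A)$. Equating the two gives $F'(A) = c' F(A)$, so $F'$ and $F$ agree up to the multiplicative normalization of $c$; normalizing $c=1$ pins down $F$ uniquely.

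The only genuinely subtle step is the well-posedness of the defining supremum, where one has to wring out of the four axioms of a subequation (closedness, nontriviality, and (P)) both the lower bound and the upper bound on the set of admissible $t$'s; everything downstream is a short consequence of (P) and the affine behavior in the $I$-direction.
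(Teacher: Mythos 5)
Your proof is correct and is essentially the paper's construction: the formula $F(A)=\sup\{t\in\R:\ A-tI\in\cF\}$ is just a global repackaging of the paper's definition, which pins down $F$ by requiring each line $A+\R I$ to meet $\partial\cF$ at exactly the point prescribed by the value of $F$ and then extends affinely in the $I$-direction via (CO2). You fill in the details the paper leaves implicit (finiteness of the sup using (P) and $\cF\neq\emptyset,\,\cS(n)$, Lipschitz continuity, and uniqueness up to the normalization $c=1$), which is exactly the right bookkeeping.
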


The proof can be summarized succinctly by defining $F$ by requiring
$$
	A_0 + F(A_0) I \in \partial \cF \ \ \text{for all} \ A_0 \perp I \ \text{(i.e.} \ {\rm tr} A_0 = 0)
$$
and then extending $F$ to all $A = A_0 + tI \in \cS(n), A_0 \perp I$ by formula \eqref{CO2}.

The canonical operator $F$ for some of the examples above are as follows. One has $\lambda_1(A)$ for the convexity subeqution $\cP$, $\lambda_1(A) + \cdots \lambda_p(A)$ for the $p$-fold subequation defined by \eqref{G_constraint} with $G = G(p, \R^n)$, $\frac{1}{2} {\rm tr} A - \mu_1 - \cdots - \mu_n$ for the Lagrangian subequation defined by \eqref{G_constraint} with $G = {\rm LAG}$. As mentioned above, the G\aa rding subequation $\cF = \overline{\Gamma}$ has canonical operator $\Lambda_{\rm min}(A)$, the minimal G\aa rding eigenvale operator. The $k$-th branch has canonical operator the $k$-th G\aa rding eigenvalue operator. The construction of a canonical operator extends to subequations $\cF \subset \cJ^2(X)$ if there is sufficent monotonicity (see section 11.4 of \cite{CHLP21}).
	\end{exe}

We conclude this subsetion with one last example. As we have indicated, a general principle is that comparison holds with sufficient monotonicity. With insufficient monotonicity comparison can fail even in the constant coefficient case and even on arbitrarily small balls.

\begin{exe}[{\bf Comparison fails}]\label{exe:failure} Consider the operators $F,G \in C(\R^n \times \cS(n), \R)$ defined by
$$
F(p,A):= \lambda_{\rm min}(M(p,A)) \quad \text{and} \quad G(p,A):= \lambda_{\rm max}(M(p,A))
$$ 	
with $M = M_{\alpha}$ is the $\cS(n)$-valued function defined for $\alpha \in (1, + \infty)$ by
$$
M(p,A):= A + |p|^{\frac{\alpha - 1}{n}} (P_{p^\perp} + \alpha P_p)) \quad \text{if} \ p \neq 0 \quad \text{and} \quad M(0,A):= A.
$$	
where for $p \neq 0$, $P_p, P_{p^\perp}$ are the projections onto the subspaces $[p], [p]^{\perp}$; that is,
\vspace{-0,5pc}
$$
P_p = \frac{1}{|p|^2} p \otimes p \quad \text{and} \quad P_{p^{\perp}} = I - P_p.
$$ 
These operators are studied in \cite{CHLP21}. Existence for the Dirichlet problem holds on all balls (with continuous Dirichlet data). The comparison principle, maximum principle and uniqueness of solutions fail on all arbitrarily small balls about every point. A partial explanation of these failures is that the {\em maximal monotonicity cones} for the associated (compatible) subequations $\cF,\cG$ are $\cM:= \{0\} \times \cP \subset \R^n \times \cS(n)$, which have empty interior.

\end{exe}

\section{Fundamental aspects of nonlinear potential theory}

In this section, we give a brief review of some key notions and fundamental results in the theory of $\cF$-subharmonic functions defined by a subeqation constraint set $\cF$.

\subsection{Subequations, subharmonics and duality}\label{sec:sub}

Suppose that $X$ is an open subset of $\R^n$ with $2$-jet space denoted by  $\cJ^2(X) = X \times (\R \times \R^n \times \cS(n))$. A good definition of a constraint set with a robust potential theory was given in \cite{HL11a} (also for manifolds).
	\begin{defn}[Subequations]\label{defn:subeq} A set $\cF \subset \cJ^2(X)$ is called a {\em subequation (constraint set)} if
		\begin{itemize}
			\item[(P)] $\cF$ satisfies the {\em positivity condition}
			(fiberwise); that is, for each $x \in X$
			$$
			(r,p,A) \in \cF_x \ \ \Rightarrow \ \ (r,p,A + P) \in \cF_x, \ \ \forall \, P \geq 0 \ \text{in} \ \cS(n).
			$$
			\item[(T)]  $\cF$ satisfies three conditions of {\em topological stability}: 
			\begin{equation}\tag{T1} 
		\cF = \overline{\Int \, \cF};
		\end{equation}
		\begin{equation}\tag{T2} 
		 \cF_x = \overline{\Int \, \left( \cF_x \right)}, \ \ \forall \, x \in X;
		\end{equation}
			\begin{equation}\tag{T3} \left( \Int \, \cF \right)_x = \Int \, \left( \cF_x \right), \ \ \forall \, x \in X.
			\end{equation}
			\item[(N)]  $\cF$ satisfies the {\em negativity condition}
			(fiberwise); that is, for each $x \in X$
			$$
			(r,p,A) \in \cF_x \ \ \Rightarrow \ \ (r + s,p,A) \in \cF_x, \ \ \forall \, s \leq 0 \ \text{in} \ \R.
			$$
		\end{itemize}
	\end{defn}
Notice that  by property (T1), $\cF$ is closed in $\cJ^2(X)$ and each fiber $\cF_x$ is closed in $\cJ^2$ by (T2). In addition, the interesting case is when each fiber $\cF_x$ is not all of $\cJ^2$, which we almost always assume. Also notice that in the constant coefficient pure second order case $\cF \subset \cS(n)$, property (N) is automatic and property (T) reduces to (T1)  $	\cF = \overline{\Int \, \cF}$, which is implied by (P) for $\cF$ closed. Hence in this case subequations $\cF \subset \cS(n)$ are closed sets simply satisfying (P).

 The conditions (P), (T) and (N) have various (important) implications for the potential theory determined by $\cF$. Some of these will be mentioned below (see the brief discussion following Definition \ref{defn:Fsub}).
 
 Next is {\em duality}, a notion first introduced in the pure second order coefficient case in \cite{HL09c}.
 
\begin{defn}[The dual subequation]\label{defn:dual} For a given subequation $\cF \subset \cJ^2(X)$ the 
{\em Dirichlet dual} of $\cF$ is the set $\wt{\cF} \subset \cJ^2(X)$ given by \footnote{Here and below, $c$ denotes the set theoretic complement of a subset.}
\begin{equation}\label{dual}
\wt{\cF} :=  (- \Int \, \cF)^c = - (\Int \, \cF)^c \ \ \text{(relative to \ 
$\cJ^2(X)$)}.
\end{equation}
 \end{defn}
With the help of property (T), the dual can be calculated fiberwise
\begin{equation}\label{dual_fiber}
\wt{\cF}_x :=  (- \Int \, (\cF_x))^c = - (\Int \, (\cF_x))^c \ \ \text{(relative to \  $\cJ^2$)}, \ \ \forall \, x \in X.
\end{equation}
This is a true duality in the sense that one can show
\begin{equation}\label{true_dual}
\wt{\wt{\cF}} = \cF \quad \text{and} \quad \text{$\cF$ is a subequation \ \ $\Rightarrow$ \ \  $\wt{\cF}$ is a subequation}. 
\end{equation}

Now comes the notion of {\em $\cF$-subharmonicity} for a given subequation $\cF \subset \cJ^2(X)$. There are two different natural formulations for differing degrees of regularity.
The first is the classical formulation.

\begin{defn}[Classical or $C^2$ subharmonics]\label{defn:CSH} A function $u \in C^2(X)$ is said to be {\em $\cF$-subharmononic on $X$} if
	\begin{equation}\label{VsubClass}
 J^2_x u := (u(x), Du(x), D^2u(x)) \in \cF_x, \ \ \forall \, x \in X
\end{equation}
with the accompanying notion of being {\em strictly $\cF$-subharmononic} if 
\begin{equation}\label{VsubCS}
J^2_x u \in \Int \, (\cF_x) = (\Int \, \cF)_x, \forall \, x \in X.
\end{equation}
 \end{defn}
 
 For merely upper semicontinuous functions $u \in \USC(X)$ with values in $[-\infty, + \infty)$, one replaces the $2$-jet $J^2_x u$ with the set of {\em $C^2$ upper test jets}
 	\begin{equation}\label{UCJ}
 J^{2,+}_{x} u := \{ J^2_{x} \varphi:  \varphi \ \text{is} \ C^2 \ \text{near} \ x, \  u \leq \varphi \ \text{near} \  x \ \text{with equality at} \ x \},
 	\end{equation}
 thus arriving at the following viscosity formulation.
 
 \begin{defn}[Semicontinuous subharmonics]\label{defn:Fsub}  A function $u \in \USC(X)$ is said to be {\em $\cF$-subharmonic on $X$} if
	\begin{equation}\label{Vsub}
	J^{2,+}_x u \subset \cF_x, \ \ \forall \, x \in X.
	\end{equation}
	We denote by $\cF(X)$ the set of all $\cF$-subharmonics on $X$.
\end{defn} 

We now recall some of the implications that properties (P), (T) and (N) have on an $\cF$-potential theory. Property (P) is crucial for {\bf {\em $C^2$-coherence}}, meaning classical $\cF$-subharmonics are $\cF$-subharmonics in the sense \eqref{Vsub}, since for $u$ which is $C^2$ near $x$, one has
$$
	J^{2,+}_x = J_x^2u + (0,0,\cP) \ \ \text{where} \ \ \cP = \{ P \in \cS(n): \ P \geq 0 \}.
$$
 The natural notion of $w \in \LSC(X)$ being {\em $\cF$-superharmonic} using {\em lower test jets} is  
	\begin{equation}\label{Vsuper1}
J^{2,-}_x w \subset \left(\Int \, (\cF_x)\right)^c, \ \ \forall \, x \in X,
	\end{equation}
	which by duality and property (T) is equivalent to $-w \in \USC(X)$ satisfying
	\begin{equation}\label{Vsuper2}
J^{2,+}_x (-w) \subset \wt{\cF}_x, \ \ \forall \, x \in X.
	\end{equation}
That is,  
	\begin{equation}\label{Vsuper3}
\text{$w$ is $\cF$-superharmonic \ \ $\Leftrightarrow$ \ \ 	$-w$ is $\wt{\cF}$-subharmonic}.
\end{equation}

Next note that property (T) insures the local existence of strict classical $\cF$-superharmonics at points $x \in X$ for which $\cF_x$ is non-empty. One simply takes the quadratic polynomial whose $2$-jet at $x$ is $J \in \Int \, (\cF_x)$. Finally, property
(N) eliminates obvious counterexamples to comparison. The simplest counterexample is provided by the constraint set $\cF \subset J^2(\R)$ in dimension one associated to the equation $u^{\prime \prime} - u = 0$.

\subsection{Monotonicity}

This fundamental notion appears in various guises. It is a useful and unifying concept. One says that a subequation $\cF$ is {\em $\cM$-monotone} for some subset $\cM \subset \J^2(X)$ if 
\begin{equation}\label{monotone}
\cF_x + \cM_x \subset \cF_x \ \ \text{for each} \  x \in X.
\end{equation}
For simplicity, we will restrict attention to (constant coefficient) {\em monotonicity cones}; that is, monotonicity sets $\cM$ for $\cF$ which have constant fibers which are closed cones with vertex at the origin. 

First and foremost, the properties (P) and (N) are monotonicity properties. Property (P) for subequations $\cF$ corresponds to {\em degenerate elliptic} operators $F$ and properties (P) and (N) together correspond to {\em proper elliptic} operators. Note that (P) plus (N) can be expressed as the single monotonicity property 
\begin{equation}\label{MM}
 \cF_x + \cM_0 \subset \cF_x   \ \ \text{for each} \ x \in X
\end{equation}
where
\begin{equation}\label{MMC}
\cM_0 := \cN \times \{0\} \times \cP \subset \cJ^2 = \R \times \R^n \times \cS(n)
\end{equation}
with
\begin{equation}\label{NP}
\cN := \{ r \in \R : \ r \leq 0 \} \quad \text{and} \quad  \cP := \{ P \in \cS(n) : \ P \geq 0 \} .
\end{equation}
Hence $\cM_0$ will be referred to as the {\em minimal monotonicity cone} in $\cJ^2$. However,  it is important to remember that $\cM_0 \subset \cJ^2$ is {\bf not} a subequation since it has empty interior so that property (T) fails. 

Combined with duality and fiberegularity (defined in subsection \ref{sec:FR}), one has a very general, flexible and elegant geometrical approach to comparison when a subsequation $\cF$ admits a constant coefficient monotonicity cone subequation $\cM$. A key ingredient to this approach is the  {\em Subarmonic Addition Theorem}:
\begin{equation}\label{SAT}
	\cF + \cM\subset \cF \ \ \Rightarrow \ \ \cF(X) + \wt{\cF}(X) \subset \wt{\cM}(X).
\end{equation}
This result reduces the comparison principle for $\cF$ to the  {\em Zero Maximum Principle} for the constant coefficient dual cone subequation $\wt{\cM}$; that is, for all $\Omega \subset \subset X$
\begin{equation}\label{ZMP}
\text{(ZMP)} \quad \quad	  \ z \leq 0 \ \text{on} \ \partial \Omega \ \ \Rightarrow z \leq 0 \ \text{on} \ \partial \Omega,
\end{equation}
$\forall \, z \in \USC(\overline{\Omega}) \cap \wt{\cM}(\Omega)$. This reduction of comparison to (ZMP) will be referred to as the {\bf {\em monotonicity-duality method}} and will be discussed in section \ref{sec:comparison}.

Monotonicity is also used to formulate {\bf {\em reductions}} when certain jet variables are ``silent'' in the subequation constraint $ \cF$. for example, one has
	$$
	\text{(pure second order)} \ \ \ \cF_x + \cM(\cP) \subset \cF_x: \ \ \cM(\cP):= \R \times \R^n \times \cP
	$$
	$$
	\text{(gradient free)} \ \ \ \cF_x + \cM(\cN,\cP) \subset \cF_x: \ \ \cM(\cN,\cP):= \cN \times \R^n \times \cP
	$$
	$\cM(\cP)$ and $\cM(\cN, \cP)$ are fundamental {\em constant coefficient (cone) subequations} which can be identified
	with $\cP \subset \cS(n)$ and $\cQ := \cN \times \cP \subset \R \times \cS(n)$. One can identify $\cF$ with subsets of the {\em reduced jet bundles} $X \times \cS(n)$ and $X \times (\R \times \cS(n))$, respectively, ``forgetting about'' the silent jet variables (see Chapter 10 of \cite{CHLP21}). 
	
	Two important ``reduced'' examples are worth drawing special attention to.

\begin{exe}[The convexity subequation]\label{exe:CSE} The {\em convexity subequation} is $\cF = X \times \cM(\cP)$ and reduces to $X \times \cP$ which has constant coefficients (each fiber is $\cP$) and for $u \in \USC(X)$ 
$$
 u \ \text{is $\cP$-subharmonic} \ \ \Leftrightarrow \ \ u \ \text{is locally convex} 
$$
(away from any connected compontents where $u \equiv - \infty$).

The convexity subequation has its {\em canonical operator} $F  \in C(\cS(n), \R)$ defined by the minimal eigenvalue $F(A) := \lambda_{\rm min}(A)$, for which 
\begin{equation}\label{PDual}
\cP = \{ A \in \cS(n): \ \lambda_{\rm min}(A) \geq 0 \}.
\end{equation} 
The dual subequation $\wt{\cF}$ has constant fibers given by 
\begin{equation}\label{Pdual}
\wt{\cP} = \{ A \in \cS(n) : \lambda_{\rm max}(A) \geq 0 \}
\end{equation}
which is the {\em subaffine subequation}. The set $\wt{\cP}(X)$ of dual subharmonics agrees with $\SA(X)$ the set of {\em subaffine functions} defined as those functions $u \in \USC(X)$ which satisfy the {\em subaffine property} (comparison with affine functions): for every $ \Omega \subset \subset X$ one has
	\begin{equation}\label{SAP}
u \leq a \ \ \text{on} \ \partial \Omega \ \ \Rightarrow \ \ u \leq a \ \ \text{on} \ \Omega, \ \ \text{for every $a$ affine}. 
	\end{equation}
The fact that $\wt{\cP}(X) = \SA(X)$ is shown in \cite{HL09c}. The subaffine property for $u$ is stronger than the maximum principle for $u$ since constants are affine functions. It has the advantage over the maximum principle of being a local condition on $u$. This leads to the comparison principle for all pure second order constant coefficient subequations \cite{HL09c} and extends to variable ccoefficient subequations \cite{CP21} using a notion of fiberegularity, as will be discussed in section \ref{sec:comparison}.

\end{exe}

\begin{exe}[The convexity-negativity subequation]\label{exe:CNSE} The constant coefficient gradient-free subequation $\cF= X \times \cM(\cN, \cP)$ reduces to $X \times \cQ \subset X \times (\R \times \cS(n))$ 
whose (constant) fibers are
\begin{equation}\label{Q}
\cQ = \cN \times \cP = \{ (r,A) \in \R \times \cS(n): r \leq 0 \ \ \text{and} \ \  A \geq 0 \}.
\end{equation}
The (reduced) dual subequation has (constant) fibers
\begin{equation}\label{Qdual}
\wt{\cQ} = \{ (r,A) \in \R \times \cS(n): r \leq 0 \ \ \text{or} \ \  A \in \wt{\cP} \}.
\end{equation}
The set $\wt{\cQ}(X)$ of dual subharmonics agrees with  $\SA^+(X)$, the set of {\em subaffine plus functions} defined as those functions $u \in \USC(X)$  which satisfy the {\em subaffine plus property}: for every $\Omega \subset \subset X$ one has
	\begin{equation}\label{SAPP}
	u \leq a \ \ \text{on} \ \partial \Omega \ \ \Rightarrow \ \ u \leq a \ \ \text{on} \ \Omega, \ \ \text{for every $a$ affine with} \ a_{|\overline{\Omega}} \geq 0, 
\end{equation}
from which the (ZMP) for $\wt{\cQ}$ subharmoncis follows immediately. 
The fact that $\wt{\cQ}(X) = \SA^+(X)$ is shown in \cite{CHLP21} together with the additional equivalence 
\begin{equation}\label{SAPP2}
\SA^+(X):= \{ u \in \USC(X): \ u^+ := \max \{u, 0 \} \in \SA(X) = \wt{P}(X) \},
\end{equation}
The validity of the (ZMP) for $\cQ$-subharmonics leads to the comparison principle by the monotonicity-duality method for all gradient free subequations with constant coefficients in \cite{CHLP21} and extends to variable coefficient gradient-free subequations in \cite{CP21}, using the notion of fiberegulaity.
\end{exe}

\subsection{Fiberegularity}\label{sec:FR} This fundamental notion can be used to pass from constant coefficient subequations (and operators) to ones with variable coefficients.

\begin{defn}\label{defn:fibereg} A subequation $\cF \subset \cJ^2(X)$ is {\em fiberegular} if the fiber map $\Theta$ is {\em (Hausdorff) continuous}; that is, if the set-valued map
	$$
	\Theta: X \to \cK(\cJ^2) \ \ \text{defined by} \ \  \Theta(x):= \cF_x,\ \ x \in X
	$$
	is continuous when the closed subsets $\cK(\cJ^2)$ of $\cJ^2$ are equipped with the {\em Hausdorff metric}
	$$
d_{\cH}(\Phi, \Psi) := {\rm max} \left\{ \sup_{J \in \Phi} \inf_{J' \in \Psi} || J - J'||, \sup_{J' \in \Psi} \inf_{J \in \Phi} || J - J'|| \right\}
	$$
	where 
	$$ 
	||J|| = ||(r,p,A)|| := \max \left\{ |r|, |p|, \max_{1 \leq k \leq n} |\lambda_k(A)| \right\}
	$$ 
	is taken as the norm on $\cJ^2$.
\end{defn}
This notion was first introduced in \cite{CP17} in the special case $\cF \subset X \times \cS(n)$. We will also refer to $\Theta$ as a {\em continuous proper ellipitc map} since it takes values in the closed (non-empty and proper) subsets of $\cJ^2$ satisfying properties (P) and (N). If $\cF$ is $\cM$-monotone for some (constant coefficient) monotonicity cone subequation, fiberegularity has the more useful equivalent formulation: there exists $J_0 \in \Int \, \cM$ such that for each fixed $\Omega \subset \subset X$ and $\eta > 0$ there exists $\delta = \delta(\eta, \Omega)$ such that
\begin{equation}\label{FC}
x,y \in \Omega, |x-y| < \delta \ \ \Longrightarrow \ \  \Theta(x) + \eta J_0 \subset \Theta(y).
\end{equation}
Note that property holds for each fixed $J_0 \in \Int \, \cM$ (see \cite{CPR21}) and that in the pure second order and gradient-free cases there is a ``canonical'' reduced jet $J_0 = I \in \cS(n)$ and $J_0 = (-1,I) \in \R \times \cS(n)$, respectively.  
Also note that fiberegularity is uniform on bounded domains as the $\delta$ in \eqref{FC} is independent of $x,y \in \Omega$.

Fiberegularity is crucial since it implies the {\bf {\em uniform translation property}} for subharmonics : {\em if $u \in \cF(\Omega)$, then there are small $C^2$ strictly $\cF$-subharmonic perturbations of \underline{all small translates} of  $u$ which belong to  $\cF(\Omega_{\delta})$}, where  $\Omega_{\delta}:= \{ x \in \Omega: d(x, \partial \Omega) > \delta \}$.

\begin{thm}[Uniform translation property for subharmonics]\label{thm:UTP}
	Suppose that a subequation $\cF$ is a fiberegular and $\cM$-monotone on $\Omega \subset \subset \R^n$ for some monotonicity cone subequation $\cM$. Suppose that $\cM$ admits a strict approximator; that is, there exists $\psi \in \USC(\overline{\Omega}) \cap C^2(\Omega)$ which is strictly  $\cM$-subharmonic   on  $\Omega$. Given $u \in \cF(\Omega)$, for each $\theta > 0 $ there exist $\eta = \eta(\psi, \theta) > 0$ and $\delta = \delta(\psi, \theta) > 0$ such that
	\begin{equation}\label{UTP}
 u_{y,\theta} = \tau_yu + \theta \psi \ \ \text{belongs to} \ \cF(\Omega_{\delta}), \ \ \forall \, y \in B_{\delta}(0),
	\end{equation}
	where $\tau_y u(\, \cdot \, ):= u(\, \cdot - y)$.
\end{thm}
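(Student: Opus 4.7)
The plan is to verify the defining condition of $\cF$-subharmonicity for $u_{y,\theta}$ fiberwise. I will use two standard rules for upper test jets: translation gives $J^{2,+}_x(\tau_y u) = J^{2,+}_{x-y} u$, and addition of a $C^2$ function simply shifts upper test jets by its $2$-jet. Combining them,
\[
J^{2,+}_x u_{y,\theta} \;=\; J^{2,+}_{x-y} u \;+\; \theta\, J^2_x \psi .
\]
For $x \in \Omega_\delta$ and $|y|<\delta$ we have $x-y \in \Omega$, so by hypothesis $u \in \cF(\Omega)$ we get $J^{2,+}_{x-y} u \subset \cF_{x-y} = \Theta(x-y)$. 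The task thus reduces to showing the fiberwise inclusion
\[
\Theta(x-y) + \theta\, J^2_x \psi \;\subset\; \Theta(x) \qquad \text{for all } x\in\Omega_\delta,\ |y|<\delta .
\]

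The key idea is to split the shift $\theta\, J^2_x \psi$ so that one piece is absorbed by fiberegularity \eqref{FC} and the other by $\cM$-monotonicity of $\cF$. Fixing any $J_0 \in \Int\cM$ and an auxiliary parameter $\eta > 0$, write $\theta J^2_x \psi = \eta J_0 + (\theta J^2_x \psi - \eta J_0)$. If one can arrange both (i) $\theta J^2_x \psi - \eta J_0 \in \cM$ uniformly for $x \in \Omega_\delta$, and (ii) $\Theta(x-y) + \eta J_0 \subset \Theta(x)$ for $|y|<\delta$, then
\[
\Theta(x-y) + \theta J^2_x\psi
= \bigl(\Theta(x-y) + \eta J_0\bigr) + \bigl(\theta J^2_x\psi - \eta J_0\bigr)
\subset \Theta(x) + \cM \subset \Theta(x),
\]
where the last step is $\cM$-monotonicity of $\cF$.

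Condition (i) is where the strict $\cM$-approximator hypothesis is used. Since $\psi \in C^2(\Omega)$ with $J^2_x\psi \in \Int\cM$ for each $x \in \Omega$, the continuous map $x \mapsto J^2_x\psi$ sends every compact $K \Subset \Omega$ into a compact subset of the open cone $\Int\cM$. A standard convex-cone / openness argument then produces $\epsilon_K>0$ such that $J^2_x\psi - \epsilon_K J_0 \in \cM$ for all $x\in K$; by the cone property of $\cM$ this upgrades to $\theta J^2_x\psi - \theta\epsilon_K J_0 \in \cM$ on $K$. Condition (ii) is provided directly by fiberegularity \eqref{FC}, which for every prescribed $\eta>0$ supplies some $\delta_1(\eta)>0$ such that $x,x-y \in \Omega$ and $|y|<\delta_1(\eta)$ imply the required inclusion.

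The main technical obstacle — in my view the delicate point of the proof — is harmonizing these two dependencies, since $\epsilon_K$ decreases as one enlarges $K$ (i.e.\ shrinks $\delta$), while $\delta_1$ depends on $\eta = \theta\epsilon_K$. I would resolve this by taking $K := \overline{\Omega_\delta}$ and choosing $\delta > 0$ small enough to satisfy the self-consistent inequality $\delta \le \delta_1\bigl(\theta\,\epsilon_{\overline{\Omega_\delta}}\bigr)$; the existence of such a $\delta$ uses that $\epsilon_{\overline{\Omega_\delta}}>0$ for every $\delta>0$ together with the continuity of the fiberegularity modulus, so that both sides behave controllably on the bounded set $\Omega$. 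With $\eta := \theta\,\epsilon_{\overline{\Omega_\delta}}$ and this $\delta$, conditions (i) and (ii) hold simultaneously, and the theorem follows.
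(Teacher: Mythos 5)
Your reduction is set up correctly: the identities $J^{2,+}_x(\tau_y u)=J^{2,+}_{x-y}u$ and $J^{2,+}_x(v+\theta\psi)=J^{2,+}_xv+\theta J^2_x\psi$ are valid, and the splitting $\theta J^2_x\psi=\eta J_0+(\theta J^2_x\psi-\eta J_0)$, with the first piece absorbed by fiberegularity \eqref{FC} and the second by $\cM$-monotonicity, is exactly the mechanism of the proofs in \cite{CP17}, \cite{CP21}, \cite{CPR21} to which the paper defers. The gap is precisely at the step you flag as delicate. The self-consistent inequality $\delta\le\delta_1\bigl(\theta\,\epsilon_{\overline{\Omega_\delta}}\bigr)$ cannot be achieved by ``choosing $\delta$ small enough'': decreasing $\delta$ \emph{enlarges} $\overline{\Omega_\delta}$, hence decreases $\epsilon_{\overline{\Omega_\delta}}$ and with it $\delta_1(\theta\,\epsilon_{\overline{\Omega_\delta}})$, so shrinking $\delta$ makes the inequality harder, not easier; and there is no ``continuity of the fiberegularity modulus'' in the hypotheses --- \eqref{FC} only supplies some $\delta_1(\eta)>0$ for each $\eta>0$, with no rate. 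Since the map $\delta\mapsto\delta_1(\theta\,\epsilon_{\overline{\Omega_\delta}})$ is merely positive and nondecreasing, nothing prevents it from lying strictly below the identity on the whole admissible range, and then no self-consistent $\delta$ exists.

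This is not a removable technicality under the hypotheses as you use them: if the strictness of $\psi$ degenerates at $\partial\Omega$ (which $\psi\in\USC(\overline{\Omega})\cap C^2(\Omega)$, strictly $\cM$-subharmonic only on $\Omega$, permits), the conclusion of your scheme can fail for every relevant $\delta$. For instance, with $n=1$, $\Omega=(0,1)$, the fiberegular $\cM(\cP)$-monotone subequation with fibers $\cF_x=\{(r,p,A):A\ge\sqrt{x}\}$, $u(x)=\tfrac{4}{15}x^{5/2}\in\cF(\Omega)$ and the admissible approximator $\psi(x)=x^4$ (so $\psi''(x)=12x^2>0$ on $\Omega$ but vanishing at $0$), for small $\theta>0$ and any $\delta\in(0,\tfrac12)$ one may take $y$ slightly below $\delta$ and $x\in\Omega_\delta$ slightly above $\delta$ to get $(\tau_yu+\theta\psi)''(x)=\sqrt{x-y}+12\theta x^2<\sqrt{x}$, because $\sqrt{x}-\sqrt{x-y}\approx\sqrt{\delta}\gg12\theta\delta^2$; hence $\tau_yu+\theta\psi\notin\cF(\Omega_\delta)$. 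The cited proofs avoid this circularity by fixing the parameters in the opposite order: the strictness of $\psi$ is exploited \emph{uniformly} (in the pure second order and gradient-free cases $\psi$ is the quadratic $\tfrac12|x|^2$, so $J^2_x\psi$ stays a fixed distance inside $\Int\,\cM$ independently of $x$), one sets $\eta:=\theta\epsilon$ with this uniform $\epsilon$, and only then defines $\delta$ from \eqref{FC} as $\delta_1(\eta)$ (capped so that $\Omega_\delta\neq\emptyset$) --- no fixed-point argument is needed. In the generality you set up, you would either need such a uniform lower bound on the strictness of $\psi$ near $\partial\Omega$, or you must decouple the translation radius from the domain-shrinking parameter; as written, your resolution of the quantifier interplay does not go through.
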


In the pure second order and gradient-free cases ($\cF \subset \Omega \times \cS(n)$ and $\cF \subset \Omega \times (\R \times \cS(n))$, one always has a quadratic strict approximator $\psi$ and the theorem holds for all continuous coefficient $\cF$ which are minimally monotone (with  $\cM = \cP \subset \cS(n)$ and $\cM = \cQ = \cN \times \cP \subset \R \times \cS(n)$ respectively) as shown in \cite{CP17}, \cite{CP21}. The general $\cM$-monotone and fiberegular case is treated in \cite{CPR21}. In this general case, the hypothesis of the existence of a stict approximator $\psi$ creates no additional problems if the objective is to prove comparison. This is because one knows from \cite[Theorem 6.2]{CHLP21} that the existence of a strict approximator $\psi$ for $\cM$ ensures the validity of the (ZMP) for $\wt{\cM}$, which is needed for our monotonicity duality method. Moreover, the (constant coefficient) monotonicity cone subequations which admit strict approxiamtors are well understood by the study made in \cite{CHLP21}.

\subsection{Subharmonic addition for quasiconvex functions} Many results about $\cF$-subharmonic functions $u$ are more easily proved if one assumes that $u$ is also locally quasiconvex\footnote{We have adopted the term quasiconvex which is consistent with the use of {\em quasi-plurisubharmonic} function in several complex variables. Quasiconvex functions are sometimes referred to as {\em semiconvex} functions, although this term is a bit misleading. They are functions whose Hessian (in the viscosity sense) is locally bounded from below.}. Then, one can make use of {\em quasiconvex approximation} to extend the result to semicontinuous $u$. Here we discuss some of the main results in this direction. See \cite{PR22} for an extensive treatment, which borrows heavily from \cite{HL16a} and \cite{HL16b}.

\begin{defn}
	A function $u: C \to \R$ is {\em $\lambda$-quasiconvex} on a convex set  $C \subset \R^n$ if there exists $\lambda \in \R_{+}$ such that $u + \frac{\lambda}{2} | \, \cdot \, |^2$ is convex on $C$. A function $u: X \to \R$ is {\em locally quasiconvex} on an open set  $X \subset \R^n$ if for every $x \in X$, $u$ is $\lambda$-quasiconvex on some ball about $x$ for some $\lambda \in \R_{+}$.  
\end{defn}

Such functions are twice differentiable for almost every\footnote{With respect to the Lebesgue measure on $\R^n$.}  $x \in X$ by a very easy generalization of Alexandroff's theorem for convex functions (the addition of a smooth function has no effect on differentiability). This is one of the many properties that quasiconvex functions inherit from convex functions. Quasiconvex functions are used to approximate $u \in \USC(X)$ (bounded from above) by way of the {\em sup-convolution}, which for each $\veps > 0$ is defined by
\begin{equation}\label{sup_conv}
	 u^{\veps}(x) := \sup_{y\in X} \left( u(y) -\frac{1}{2 \veps} |y - x|^2 \right), \ \ x \in X.
\end{equation}
One has that $u^{\veps}$ is $\frac{2}{\veps}$-quasiconvex and decreases pointwise to $u$ as $\veps \to 0^+$. 
 There is an underlying pure second order potential theory for $\lambda$-quasiconvex functions on  $X$; namely with respect to the {\em $\lambda$-quasiconvexity} subequation 
\begin{equation}\label{LQCS}
\cP_{\lambda} := \{ A \in \cS(n): A + \lambda I \in \cP \}.
\end{equation}
 Two important results follow. 
 
	\begin{thm}[The Almost Everywhere Theorem]\label{thm:AET}
	For locally quasiconvex functions \textcolor{blue}{$u$}
	$$
	J^2_x u = (u(x), Du(x), D^2u(x)) \in \cF_x \ \ \text{for \  $\cL^n$-a.e. $x \in X$} \ \ \Longleftrightarrow \ \ u \in \cF(X).
	$$
\end{thm}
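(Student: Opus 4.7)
The plan is to establish both directions using the fact that locally quasiconvex functions are twice differentiable $\cL^n$-a.e.\ (an immediate consequence of Alexandrov's theorem applied to $u + \tfrac{\lambda}{2}|\cdot|^2$), together with the subequation axiom (P) and the closedness furnished by (T1)--(T2).

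For the ($\Leftarrow$) direction, suppose $u \in \cF(X)$ and let $x_0$ be a point of twice-differentiability with $2$-jet $J_0 = (u(x_0), Du(x_0), D^2 u(x_0))$. For each $\varepsilon > 0$ the quadratic polynomial
\[
\varphi_\varepsilon(x) := u(x_0) + Du(x_0)\cdot(x-x_0) + \tfrac{1}{2}(x-x_0)^{T}\bigl(D^2 u(x_0) + \varepsilon I\bigr)(x-x_0)
\]
satisfies $u - \varphi_\varepsilon = -\tfrac{\varepsilon}{2}|x-x_0|^2 + o(|x-x_0|^2) \leq 0$ near $x_0$ with equality at $x_0$, so $J^2_{x_0}\varphi_\varepsilon = J_0 + (0,0,\varepsilon I)$ lies in $J^{2,+}_{x_0}u \subset \cF_{x_0}$. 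Since $\cF_{x_0}$ is closed (property (T2)), sending $\varepsilon \to 0^+$ yields $J_0 \in \cF_{x_0}$.

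For the ($\Rightarrow$) direction, fix $x_0 \in X$ and an upper test jet $J = (r,p,A) \in J^{2,+}_{x_0}u$, realized by some $C^2$ function $\varphi$ with $J^2_{x_0}\varphi = J$ and $\varphi \geq u$ on a neighborhood $U$ of $x_0$. For $\varepsilon > 0$, set $\varphi_\varepsilon(x) := \varphi(x) + \varepsilon|x-x_0|^2$; then $w_\varepsilon := u - \varphi_\varepsilon$ is semiconvex on $U$ (since $u$ is $\lambda$-quasiconvex and $\varphi_\varepsilon$ is $C^2$) and has a \emph{strict} local maximum of value $0$ at $x_0$. The main technical input is Jensen's perturbation lemma for semiconvex functions: for $\cL^n$-a.e.\ $q$ in a small ball about the origin, $x \mapsto w_\varepsilon(x) - q\cdot(x-x_0)$ attains its maximum at some point $x_q \in U$ where $u$ is twice differentiable, with
\[
Du(x_q) = D\varphi_\varepsilon(x_q) + q, \qquad D^2 u(x_q) \leq D^2\varphi_\varepsilon(x_q).
\]
Strictness of the maximum at $x_0$ forces $x_q \to x_0$ as $|q| \to 0$, and then $u(x_q) \to r$ (sandwich $w_\varepsilon(x_q) \geq q\cdot(x_q-x_0)$ against $w_\varepsilon \leq 0$), $Du(x_q) \to p$, and $D^2\varphi_\varepsilon(x_q) \to A + 2\varepsilon I$.

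Combining with the a.e.\ hypothesis, after deleting a further null set of $q$'s we may assume $(x_q, J^2_{x_q}u) \in \cF$ at each such $x_q$. Applying (P) with the positive semidefinite matrix $D^2\varphi_\varepsilon(x_q) - D^2 u(x_q)$ gives
\[
\bigl(x_q,\, u(x_q),\, Du(x_q),\, D^2\varphi_\varepsilon(x_q)\bigr) \in \cF.
\]
Passing to the limit $q \to 0$ along a good sequence and using closedness (T1) of $\cF$ yields $(x_0, r, p, A + 2\varepsilon I) \in \cF$; sending $\varepsilon \to 0^+$ and invoking closedness once more produces $J \in \cF_{x_0}$, so $u \in \cF(X)$. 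The main obstacle is the Jensen-type lemma producing perturbed maxima at twice-differentiability points, together with the measure-theoretic verification that for a.e.\ $q$ the location $x_q$ can simultaneously be chosen in the full-measure set where the pointwise condition $J^2_x u \in \cF_x$ holds; this rests on Alexandrov's theorem and an area-type change-of-variable for the gradient map of the semiconvex function $w_\varepsilon$.
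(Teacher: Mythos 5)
Your argument is correct and follows the same route the paper takes (via its citation of Harvey--Lawson's a.e.\ theorem for quasiconvex functions): Alexandrov twice-differentiability plus closedness and property (P) for the easy direction, and for the converse the Jensen/Slodkowski control of upper contact points of the semiconvex function $u-\varphi-\varepsilon|\cdot-x_0|^2$, followed by the first/second order maximum conditions, positivity and closedness in the limit. Your ``for a.e.\ $q$'' phrasing is just an equivalent refinement of the positive-measure statement for the contact set that the paper attributes to Jensen and Slodkowski, so there is no substantive difference in approach.
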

 This result is proven \cite{HL16b}. The main point in the proof is to control the measure of the set of {\em upper contact points} near $x$ if $u$ is quasiconvex. This control comes from  either of two results obtained independently by Slodkowski \cite{Sl84} and Jensen \cite{Je88}. These two measure theoretic results are shown to be equivalent in \cite{HL16a}. 

\begin{thm}[The Subharmonic Addition Theorem: Quasiconvex Version]\label{thm:SAT}
	Suppose that the subequations $\cF, \cG$ and $\cH$ satisfy
	\begin{equation*} \tag{Jet addition}
	\cF_x + \cG_x \subset \cH_x, \ \ \text{for each} \ x \in X. 
	\end{equation*}
	If $u \in \cF(X)$ and $v \in \cG(X)$ are locally quasiconvex, then
	\begin{equation*} \tag{Subharmonic addition}
	u + v \in \cH(X). 
	\end{equation*}
\end{thm}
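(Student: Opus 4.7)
The plan is to reduce the statement directly to the Almost Everywhere Theorem (Theorem \ref{thm:AET}), exploiting the fact that the class of locally quasiconvex functions is closed under addition.

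First, I would verify the structural preliminaries on $w := u+v$. Upper semicontinuity of $w$ is immediate from that of $u$ and $v$. For quasiconvexity, if $u$ is $\lambda_1$-quasiconvex on a ball $B \subset X$ and $v$ is $\lambda_2$-quasiconvex on the same ball (shrink to a common ball about each point), then $w + \tfrac{\lambda_1+\lambda_2}{2}|\cdot|^2 = \bigl(u + \tfrac{\lambda_1}{2}|\cdot|^2\bigr) + \bigl(v + \tfrac{\lambda_2}{2}|\cdot|^2\bigr)$ is convex on $B$ as a sum of two convex functions. Hence $w$ is locally $(\lambda_1+\lambda_2)$-quasiconvex on $X$.

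Next, I would apply Theorem \ref{thm:AET} in the ``$\Rightarrow$'' direction to both $u$ and $v$: since $u \in \cF(X)$ and $v \in \cG(X)$ are locally quasiconvex, there are sets $E_u, E_v \subset X$ of full $\cL^n$-measure on which $u,v$ are twice differentiable in the classical sense with
\[
J^2_x u \in \cF_x \quad (x \in E_u), \qquad J^2_x v \in \cG_x \quad (x \in E_v).
\]
Let $E := E_u \cap E_v$, which also has full $\cL^n$-measure. At every $x \in E$, both $u$ and $v$ admit a classical second-order Taylor development, so the same holds for $w$ with
\[
J^2_x w = J^2_x u + J^2_x v \in \cF_x + \cG_x \subset \cH_x,
\]
where in the last step I invoke the jet addition hypothesis $\cF_x + \cG_x \subset \cH_x$.

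Finally, I would apply the ``$\Leftarrow$'' direction of Theorem \ref{thm:AET} to $w$: since $w$ is locally quasiconvex and $J^2_x w \in \cH_x$ for $\cL^n$-a.e.\ $x \in X$, we conclude $w \in \cH(X)$, which is the desired subharmonic addition. The main obstacle is not in the logical structure---which is essentially bookkeeping once Theorem \ref{thm:AET} is available---but in the fact that this proof is only available in the quasiconvex setting precisely because the upper contact set measure estimates of Slodkowski--Jensen underlying Theorem \ref{thm:AET} require quasiconvexity; without it, the a.e.\ Taylor development $J^2_x w = J^2_x u + J^2_x v$ need not be well defined since the twice-differentiability sets $E_u, E_v$ would generally fail to have full measure.
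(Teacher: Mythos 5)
Your proposal is correct and follows essentially the same route as the paper, which states that the quasiconvex Subharmonic Addition Theorem ``follows easily from the almost everywhere theorem'': you use the forward direction of Theorem \ref{thm:AET} on $u$ and $v$, add the classical jets on a common full-measure set, invoke the jet addition hypothesis $\cF_x + \cG_x \subset \cH_x$, and close with the reverse direction of Theorem \ref{thm:AET} applied to the (still locally quasiconvex, upper semicontinuous) sum $u+v$. Your bookkeeping of the quasiconvexity constant $\lambda_1+\lambda_2$ and of the full-measure intersection $E_u \cap E_v$ is exactly the standard argument.
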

This result appears in \cite{HL16b} and follows easily from the almost everywhere theorem. Subharmonic addition extends to $u,v \in \USC(X)$ by quasiconvex approximation in various situations. This extension has been accomplished for constant coefficient subequations $\cF$ in \cite{CHLP21}, for subequations associated to imhomogeneous pure second order equations in \cite{HL19a} and for fiberegular $\cM$-monotone subequations $\cF$ in \cite{CPR21}. 

Subharmonic addition is very important when combined with the following implication between monotonicity and jet addition 
\begin{equation}\label{jet_add}
\cF_x + \cM_x \subset \cF_x \ \ \Longrightarrow \ \ \cF_x + \wt{\cF}_x \subset\wt{\cM}_x , \ \ \text{for each} \ x \in X. 
\end{equation}
This combination has the very interesting consequence that if $\cM$ is a monotonicity cone subequation for $\cF$, then sums of $\cF$-subharmonics and $\wt{\cF}$-subharmonics are $\wt{\cM}$-subharmonics. Thus, when $\cM$ has constant coefficients, comparison for $\cF$ reduces to the validity of the zero maximum principle (ZMP) for $\wt{\cM}$-subharmonics where $\wt{\cM}$ has constant coefficients since $\cM$ does. This is a constant coefficient potential theory and has been analyzed extensively in \cite{CHLP21}, where the validity of the (ZMP) is well understood for constant coefficient monotonicity cone subequations. This will be briefly reviewed at the end of section \ref{sec:comparison} below.
	
\subsection{A ``tool kit'' for $\cF$-subharmonics (subsolutions)}

Some of the  ``nuts and bolts'' of handling $\cF$-subharmonic functions are briefly described here. The first result is both useful for checking whether a given function is $\cF$-subharmonic and also sheds light on the notion of viscosity subsolutions.

\begin{lem}[Definitional Comparison]\label{lem:DC}
Let $\cF \subset \cJ^2(X)$ be a subequation and $u \in \USC(X)$.
\begin{itemize}
	\item[(a)]	If $u$ is $\cF$subharmonic on $X$, the following form of comparison holds on any bounded domain $\Omega \subset \subset X$:
	\vspace{-5pt}
	$$
	u+v \leq 0\ \text{on}\ \partial\Omega \implies u+v \leq 0\ \text{on}\  \Omega
	$$
for each $v \in \USC(\overline{\Omega}) \cap C^2(\Omega)$ which is strictly $\wt{\cF}$-subharmonic on $\Omega$. 
	\item[(b)]	Conversely, given $u \in \USC(X)$, suppose that for each $x\in X$ there is a neighbourhood $\Omega \subset \subset X$ of $x$ where the above form of comparison holds. Then $u$ is 	$\cF$-subharmonic on $X$. 
\end{itemize}
\end{lem}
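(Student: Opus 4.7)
I argue by contradiction: assume $M := \sup_\Omega(u+v) > 0$. Since $u + v$ is upper semicontinuous on the compact set $\overline{\Omega}$ and the boundary hypothesis forces $u + v \leq 0 < M$ on $\partial \Omega$, the supremum is attained at some interior point $x_0 \in \Omega$. The local inequality $u(x) \leq u(x_0) + v(x_0) - v(x)$ near $x_0$, with equality at $x_0$, makes $\psi(x) := u(x_0) + v(x_0) - v(x)$ into a $C^2$ upper test function for $u$ at $x_0$. By $\cF$-subharmonicity of $u$,
\[
J^2_{x_0} \psi = \bigl(u(x_0),\, -Dv(x_0),\, -D^2 v(x_0)\bigr) \in J^{2,+}_{x_0} u \subset \cF_{x_0}.
\]
On the other hand, duality together with the topological property (T) yields $\Int \wt{\cF}_{x_0} = -(\cF_{x_0})^c$, so the strict $\wt{\cF}$-subharmonicity of $v$ at $x_0$ is equivalent to
\[
-J^2_{x_0} v = \bigl(-v(x_0),\, -Dv(x_0),\, -D^2 v(x_0)\bigr) \notin \cF_{x_0}.
\]
Since $M > 0$ gives $u(x_0) > -v(x_0)$, applying negativity (N) to the jet $J^2_{x_0}\psi \in \cF_{x_0}$ with the shift $s := -v(x_0) - u(x_0) < 0$ produces $(-v(x_0), -Dv(x_0), -D^2 v(x_0)) \in \cF_{x_0}$, a contradiction.

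\textbf{Plan for (b).} I prove the contrapositive. Suppose $u$ fails to be $\cF$-subharmonic at some $x_0 \in X$, so there exist a $C^2$ function $\varphi$ near $x_0$ with $u \leq \varphi$ near $x_0$, equality at $x_0$, and $(r_0,p_0,A_0) := J^2_{x_0}\varphi \notin \cF_{x_0}$. Since $\cF$ is closed in $\cJ^2(X)$ by (T1), there is $\rho > 0$ such that every jet within distance $\rho$ of $(r_0,p_0,A_0)$ at a base point within $\rho$ of $x_0$ remains outside $\cF$. For $\beta > 0$ small (so that $2\beta < \rho/2$) and $r > 0$ even smaller, set $\alpha := \beta r^2/2$ and define
\[
v(x) := -\varphi(x) + \alpha - \beta|x - x_0|^2 \qquad \text{on } \overline{B_r(x_0)}.
\]
A routine continuity estimate for $\varphi, D\varphi, D^2\varphi$ shows $-J^2_x v$ stays within distance $\rho$ of $(r_0,p_0,A_0)$ uniformly on $\overline{B_r(x_0)}$, so $-J^2_x v \notin \cF_x$ there, meaning $v$ is strictly $\wt{\cF}$-subharmonic on $B_r(x_0)$. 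Furthermore
\[
u + v \leq \varphi + v = \alpha - \beta|x-x_0|^2,
\]
which equals $-\beta r^2/2 < 0$ on $\partial B_r(x_0)$, while $(u+v)(x_0) = \alpha > 0$. This violates the hypothesized local comparison on any neighborhood of $x_0$ fitting inside $B_r(x_0)$ (or, after damping $v$ by a smooth cutoff and subtracting a sufficiently large constant outside $B_{r/2}(x_0)$, on the given neighborhood $\Omega$, with the parameters kept inside the openness radius $\rho$ so that strict $\wt{\cF}$-subharmonicity is preserved).

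\textbf{Main obstacle.} Part (a) is a direct unwinding of the duality formula once the correct upper test function $\psi(x) = u(x_0) + v(x_0) - v(x)$ is identified; the only input beyond definitions is the one-line use of (N). The principal delicacy is in (b): the perturbation parameters $\alpha, \beta, r$ must be balanced against the openness radius $\rho$ of the complement of $\cF$. In particular, the quadratic term $-\beta|x-x_0|^2$ is indispensable because it is what produces the sign change between $x_0$ and $\partial B_r(x_0)$, yet its Hessian contribution $+2\beta I$ (appearing in $-J^2_x v$) must be small enough that property (P) does not push the jet back into $\cF_x$. Fixing $\beta$ first, then $r$ depending on the continuity moduli of $\varphi$ at $x_0$, then $\alpha = \beta r^2/2$, resolves this balancing act.
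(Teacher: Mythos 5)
Your part (a) is correct and is the standard argument: the interior maximum point yields the upper test jet $(u(x_0),-Dv(x_0),-D^2v(x_0))\in\cF_{x_0}$, duality together with property (T) converts strict $\wt\cF$-subharmonicity of $v$ into $-J^2_{x_0}v\notin\cF_{x_0}$, and property (N) gives the contradiction. Your ball construction in (b) is also the right core idea (it essentially re-derives the Bad Test Jet Lemma that the paper states as a separate tool, with your $-\beta|x-x_0|^2$ playing the role of the $\epsilon$-strictness), and the balancing of $\alpha,\beta,r$ against the openness radius $\rho$ of the complement of $\cF$ is handled correctly.

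The gap is the parenthetical patch at the end of (b). Damping $v$ by a cutoff and subtracting constants outside $B_{r/2}(x_0)$ does not produce a function that is strictly $\wt\cF$-subharmonic on the whole given $\Omega$: away from $x_0$ you know nothing about $\cF_x$ beyond the subequation axioms, the cutoff's gradient and Hessian in the transition annulus are uncontrolled relative to $\cF_x$, and while subtracting positive constants preserves strict $\wt\cF$-subharmonicity (by (N)), it cannot create it. In fact no patch can exist if the hypothesis of (b) is read as comparison on a single fixed neighborhood of each point, because that version is false. Take $n=1$, $\cF$ the convexity subequation (so strict $\wt\cF$-subharmonicity of a $C^2$ function means $v''>0$) and $u(x)=x^2-\epsilon|x|$ on $X=\R$. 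Then $u$ is not $\cF$-subharmonic at $0$, since $\varphi(x)=-Kx^2$ is an upper test function at $0$ with negative second derivative; yet on $\Omega=(-R,R)$ with $R>\epsilon$ the comparison form holds: the boundary condition forces $v(\pm R)\le-(R^2-\epsilon R)$, convexity of $v$ then gives $v\le-(R^2-\epsilon R)\le -u$ throughout $(-R,R)$, so $u+v\le 0$ on $\Omega$; and around every $x\ne 0$ a small interval avoiding $0$ works by part (a). Hence the hypothesis in (b) must be understood, as in the sources the paper cites, as the comparison form holding on arbitrarily small neighborhoods of each point (equivalently, on all subdomains $\Omega'\subset\subset\Omega$, i.e. with the given neighborhood playing the role of $X$ in the statement of (a)). Under that reading your construction is already complete: simply draw the contradiction on a small ball $B_r(x_0)$ to which the hypothesis applies, and drop the cutoff altogether.
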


Part (a) is a well-known principle in the viscosity theory of PDEs and is here recast on the potential theory side with the help of duality. Part (b) is novel (also for its natural formulation on the operator theory side in both contrained and unconstrained cases) and shows that the {\bf {\em fundamental notion}} in viscosity theory is the validity of this form of the comparison principle. The proof of definitional comparison can be found in  \cite{CHLP21} for constant coefficient $\cF$ and in \cite{CPR21} for the general case.

The next tool is widely used to prove that a given function is $\cF$-subharmonic by a contradition argument.
\begin{lem}[Bad Test Jet Lemma] 
	Let $\cF \subset \cJ^2(X)$ be a subequation. Suppose that $u \in \USC(X)$ is \underline{not} $\cF$-subharmonic at $x_0 \in X$. Then there exist $\epsilon > 0, \rho > 0$ and a 2-jet $J \notin 	\cF_x$ such that the (unique) quadratic function $Q_J$ with $J^2_{x_0} Q_J = J$ is an upper test function for $u$ at $x$ in the following $\epsilon$-strict sense: 
	$$ 
	u(x) - Q_J(x) \leq -\epsilon|x-x_0|^2 \quad \ \forall \,  x \in B_{\rho}(x_0) \ \  \text{with equality at} \ x_0.
	$$
\end{lem}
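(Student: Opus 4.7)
The plan is to start from the failure of $\cF$-subharmonicity at $x_0$, extract an upper test jet outside $\cF_{x_0}$, and then perturb its Hessian slot by a small positive multiple of the identity so as to simultaneously remain outside $\cF_{x_0}$ and produce a quadratic that strictly dominates $u$ to second order. Concretely, by the negation of Definition \ref{defn:Fsub}, there exists an upper test jet $J_0 = (r_0, p_0, A_0) \in J^{2,+}_{x_0}u$ with $J_0 \notin \cF_{x_0}$. I would fix a $C^2$ function $\varphi$ near $x_0$ witnessing this jet (so $u \leq \varphi$ near $x_0$, with equality at $x_0$, and $J^2_{x_0}\varphi = J_0$); Taylor's formula then yields
$$
\varphi(x) - Q_{J_0}(x) = o(|x-x_0|^2) \quad \text{as } x \to x_0.
$$

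Next I would exploit that the fiber $\cF_{x_0}$ is closed in $\cJ^2$, which is immediate from property (T2) (or, alternatively, from (T1) together with the closedness of $\cF$ in $\cJ^2(X)$). Its complement is therefore open, so there exists $\eta > 0$ with $B(J_0, \eta) \cap \cF_{x_0} = \emptyset$. For any $s \in (0, \eta)$ I set
$$
J := J_0 + (0, 0, sI),
$$
noting that in the norm on $\cJ^2$ recalled in Definition \ref{defn:fibereg} one has $\|J - J_0\| = s < \eta$, and hence $J \notin \cF_{x_0}$. The unique quadratic associated with $J$ is then $Q_J(x) = Q_{J_0}(x) + \tfrac{s}{2}|x-x_0|^2$.

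Combining these ingredients, on a neighbourhood of $x_0$,
$$
u(x) - Q_J(x) \leq \varphi(x) - Q_{J_0}(x) - \tfrac{s}{2}|x-x_0|^2 = o(|x-x_0|^2) - \tfrac{s}{2}|x-x_0|^2.
$$
I would then choose $\rho > 0$ so that the Taylor remainder is bounded by $\tfrac{s}{4}|x-x_0|^2$ on $B_\rho(x_0)$ and set $\epsilon := s/4 > 0$; equality at $x_0$ is automatic, since both sides vanish there. The argument presents no serious obstacle, and the only delicate point is the \emph{sign} of the Hessian perturbation: perturbing by $-sI$ would, by the contrapositive of property (P), also preserve being outside $\cF_{x_0}$, but it would shift $Q_J$ \emph{downward} and so destroy the upper test property. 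Perturbing by $+sI$ is admissible precisely because the openness of $\cJ^2 \setminus \cF_{x_0}$ beats the $\cP$-monotonicity of $\cF$ for $s$ small, and it is this upward bump of the quadratic that supplies the $\epsilon$-strict gap.
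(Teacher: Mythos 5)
Your proof is correct, and it is essentially the argument the paper has in mind: the paper disposes of the lemma by citing the equivalence of $\cF$-subharmonicity with the definition via $\epsilon$-strict quadratic upper test jets (from \cite{HL11a}), and your argument—replace the test function $\varphi$ by its $2$-jet quadratic $Q_{J_0}$, absorb the $o(|x-x_0|^2)$ Taylor error with a small bump $\tfrac{s}{2}|x-x_0|^2$, and use closedness of the fiber $\cF_{x_0}$ (property (T2)) to keep $J_0+(0,0,sI)$ outside $\cF_{x_0}$—is precisely the standard proof of that equivalence, including the correct observation that the perturbation must go in the $+sI$ direction and is legitimate only because the complement of the fiber is open. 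No gaps.
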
 
This is merely the contrapositve of the definition of being subharmonic in $x_0$ making use of $\epsilon$-strict upper test jets yield an equivalent definition see \cite{HL11a}. 

In addition to the {\em $C^2$-coherence} property and the {\em uniform translation} property (for continuous $\cM$-monotone $\cF$) discussed above, one has many additional properties which are useful in various constructions. 
\begin{prop}[Elementary properties of $\cF(X)$] 
	Let $\cF \subset \cJ^2(X)$ be a subequation. Then the following properties hold:	
	\begin{itemize}		
		\item[(i)] {\bf local:} \ \ $u \in \USC(X)$ is locally 
		$\cF$-subharmonic $\Leftrightarrow$ $u \in \cF(X)$;
		
		\item[(ii)] {\bf maximum:} \ \ $u,v \in \cF(X)$ $\implies$ $\max\{u,v\}	\in \cF(X)$;
		\item[(iii)] {\bf sliding:} \ \ $u\in \cF(X)$ $\implies$ $u-m \in \cF(X)$ \ \ for any $m >0$;
		\item[(iv)] {\bf decreasing limits:} \ \ $\{u_k\}_{k\in 			\mathbb{N}} \subset \cF(X)$ decreasing $\implies$ $u := \lim_{k\to\infty} u_k \in \cF(X)$;
		\item[(v)] {\bf uniform limits:} \ \ $\{u_k\}_{k\in \mathbb{N}} 	\subset \cF(X)$ locally uniformly converging to $u$ $\implies$ $u \in \cF(X)$;
		\item[(vi)] {\bf families locally bounded above:} \ \ if \, $\mathfrak{F} \subset \cF(X)$ is a non-empty family of functions which are locally uniformly bounded frpom above, then the upper semicontinuous envelope \footnote{We recall that $u^*(x):= \limsup_{r \to 0^+} \{ u(y): \ y \in X \cap \overline{B}_r(x) \}$ for each $x \in X$ and that $u^x$ is the minimal $\USC(X)$ function with $u \leq u^*$ on $X$.}  $u^*$  of the Perron function 
		$u(\, \cdot \,) := \sup_{w \in \mathfrak{F}} w(\, \cdot\, )$
		belongs to $\cF(X)$. 
	\end{itemize}
	Furthermore, if  $\cF$ has constant coefficients, the following also holds:
	\begin{itemize}
		\item[(vii)] {\bf translation:} \ \  $u \in \cF(X)$ $\iff$  $u(\cdot - y) \in \cF(X + y)$, for any  $y \in \R^n$.
	\end{itemize}	
\end{prop}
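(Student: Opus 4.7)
The plan is to work directly from the viscosity definition via upper test jets $J^{2,+}_x u$, exploiting the fact that the subharmonicity condition $J^{2,+}_x u \subset \cF_x$ is pointwise, combined with the subequation axioms (P), (N), (T1) (closedness of each fiber) and the elementary Bad Test Jet Lemma cited above, which allows one to restrict attention to \emph{strict} upper test jets realised by quadratics.

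The easy clauses (i), (ii), (iii), (vii) come almost for free. Property (i) is immediate because the test-jet condition at $x$ depends only on an arbitrary neighbourhood of $x$. For (ii), if $w=\max\{u,v\}$ and $\varphi$ is a $C^2$ upper test function for $w$ at $x_0$, then, assuming without loss of generality $w(x_0)=u(x_0)$, the inequality $u\leq w\leq\varphi$ near $x_0$ with equality at $x_0$ gives $J^2_{x_0}\varphi \in J^{2,+}_{x_0}u\subset\cF_{x_0}$. Property (iii) uses $J^{2,+}_x(u-m)=J^{2,+}_x u+(-m,0,0)$ together with the negativity axiom (N). Property (vii) is immediate once one observes that upper test jets transform equivariantly under translation and that the fibers of a constant-coefficient $\cF$ do not depend on the base point.

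For (iv) and (v) the plan is a standard viscosity stability argument. If $u_k\searrow u$, then $u\in\USC(X)$ by elementary semicontinuity of decreasing limits. To check $J^{2,+}_{x_0}u\subset\cF_{x_0}$ I would use the Bad Test Jet Lemma to upgrade any $J\in J^{2,+}_{x_0}u$ to the jet of a quadratic $Q_J$ which $\varepsilon$-strictly touches $u$ from above at $x_0$. On a small closed ball $\overline{B}_r(x_0)$ the maximisers $x_k$ of $u_k-Q_J$ exist (by USC and compactness), converge to $x_0$ by strictness and monotonicity of the sequence, and produce upper test jets $J_k:=J^2_{x_k}(Q_J+c_k)\in J^{2,+}_{x_k}u_k\subset\cF_{x_k}$ with $J_k\to J$. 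Closedness of $\cF$ (property (T1)) then gives $J\in\cF_{x_0}$. The same argument, replacing monotonicity by local uniform convergence, handles (v).

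The genuinely subtle statement is (vi). Writing $u(x)=\sup_{w\in\mathfrak{F}}w(x)$ and $u^*\in\USC(X)$ for its USC envelope, I would again reduce by the Bad Test Jet Lemma to a strict upper test quadratic $Q_J$ for $u^*$ at $x_0$. Using the definition of $u^*$, choose $(y_k,w_k)$ with $y_k\to x_0$ and $w_k(y_k)\to u^*(x_0)$; then take maximisers $x_k$ of $w_k-Q_J$ on $\overline{B}_r(x_0)$. A routine upper-semicontinuity argument, together with the local upper bound that guarantees finiteness of $u^*(x_0)$ and existence of the maximisers, shows $x_k\to x_0$ and $w_k(x_k)\to u^*(x_0)$; the corresponding upper test jets for $w_k$ lie in $\cF_{x_k}$ and converge to $J$, so $J\in\cF_{x_0}$ by closedness. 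The main technical hurdle here is coordinating three simultaneous limits — the USC envelope, the varying members of the family, and the maximising points — so that the limiting $2$-jet is exactly the prescribed $J$; this is precisely where the local bound from above and the strict touching from the Bad Test Jet Lemma do the essential work.
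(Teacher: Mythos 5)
Your proposal is correct and takes essentially the route the paper points to: the paper gives no in-text proof, deferring to \cite{HL11a}, and the argument there is exactly your combination of locality of test jets, the monotonicity axioms (P),(N) for (ii)--(iii), translation invariance of jets for (vii), and, for (iv)--(vi), reduction to $\varepsilon$-strict quadratic upper test jets followed by the maximum-point stability argument and closedness of $\cF$ (properties (T1)/(T2)). One caution on phrasing: a given $J\in J^{2,+}_{x_0}u$ cannot in general be ``upgraded'' to an $\varepsilon$-strict quadratic test with the \emph{same} jet (the strict quadratic's Hessian exceeds that of $J$ by a positive multiple of $I$), so either run the argument by contradiction --- the Bad Test Jet Lemma hands you a bad jet $J\notin\cF_{x_0}$ that already comes equipped with an $\varepsilon$-strict quadratic, and your maximizer argument then forces $J\in\cF_{x_0}$, a contradiction --- or conclude $J+(0,0,cI)\in\cF_{x_0}$ for all small $c>0$ and let $c\to 0^{+}$ using closedness of the fiber.
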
	
These are familiar properties for the viscosity theory of nonlinear elliptic PDEs, although stated typically only in the unconstrained case. See \cite{HL11a} for the proofs of this general potential theoretic version. 

\section{Comparison by the monotonicity-duality-fiberegularity method}\label{sec:comparison}

In this section, we present a clean elegant and flexible method for proving {\em comparison} (the comparison principle) in nonlinear potential theory. It makes use of the two ingredients  monotonicity and duality, along with some form of regularity of the subequation. There are three incarnations of the needed regularity: constant coefficients, tameness for subequations defined by inhomogeneous equations, and fiberegularity. The method works when a given subequation admits a suitable constant coefficient monotonicity cone subequation $\cM$. When the comparison principle is combined with a correspondence principle, comparison can be transferred to nonlinear elliptic PDEs. 

\subsection{Statement and history of the general result}

The method has evolved from the constant coefficient pure second order case \cite{HL09c}. The general theorem in Euclidian space is the following result \cite[Theorem 4.3]{CPR21}.

\begin{thm}[A general comparison theorem]\label{thm:GCT}
	Let $\Omega \subset \R^n$ be a bounded domain. Suppose that a subequation $\cF \subset \cJ^2(\Omega)$ is fiberegular and $\cM$-monotone on $\Omega$ for some monotonicity cone subequation $\cM$. If $\cM$ admits a $C^2$-strict subharmonic $\psi$ on $\Omega$, then comparison holds for $\cF$ on $\overline{\Omega}$; that is,
	\begin{equation}\tag{CP}\label{cp}
	u \leq w \ \text{on $\partial \Omega$} \quad \implies \quad u \leq w \ \text{on $\Omega$}
	\end{equation}
	for all $u \in \USC(\overline{\Omega})$ which are $\cF$-subharmonic on $\Omega$, and for all $w \in \LSC(\overline{\Omega})$ which are $\cF$-superharmonic on $\Omega$.
\end{thm}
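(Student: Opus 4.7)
The plan is to implement the monotonicity-duality-fiberegularity method described in Subsection \ref{sec:FR}. Set $z := u + (-w)$ on $\overline{\Omega}$. By duality (Definition \ref{defn:dual}) together with property (T), the $\cF$-superharmonicity of $w$ is equivalent to $-w \in \USC(\overline{\Omega}) \cap \wt{\cF}(\Omega)$, so $z \in \USC(\overline{\Omega})$ and $z \leq 0$ on $\partial \Omega$. The conclusion $u \leq w$ on $\Omega$ would then follow from the zero maximum principle applied to $z$, provided we can establish that $z \in \wt{\cM}(\Omega)$. Recall that by \cite[Theorem 6.2]{CHLP21} cited at the end of Subsection \ref{sec:FR}, the hypothesis that $\cM$ admits a $C^2$-strict subharmonic $\psi$ on $\Omega$ already guarantees the validity of the (ZMP) for $\wt{\cM}$-subharmonics.

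Thus the task reduces to upgrading the jet-level implication \eqref{jet_add} (which is immediate from $\cM$-monotonicity of $\cF$) to the function-level statement of subharmonic addition, namely $\cF(\Omega) + \wt{\cF}(\Omega) \subset \wt{\cM}(\Omega)$. For \emph{quasiconvex} arguments this is precisely Theorem \ref{thm:SAT} applied with $\cH_x := \wt{\cM}_x$, so the core of the argument is to reduce the general upper semicontinuous case to the quasiconvex one.

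The natural reduction is via sup-convolutions: replace $u$ by $u^{\veps}$ and $-w$ by $(-w)^{\veps}$, each $\tfrac{2}{\veps}$-quasiconvex and decreasing pointwise to the original as $\veps \to 0^+$. Here lies the main obstacle, which is exactly what fiberegularity is designed to overcome. In the variable coefficient setting, a sup-convolution of an $\cF$-subharmonic need \emph{not} be $\cF$-subharmonic, because $u^{\veps}(x)$ is essentially the value of a small translate $\tau_y u$ of $u$ and the fibers $\cF_x$ vary with $x$. The remedy is the Uniform Translation Property (Theorem \ref{thm:UTP}): for arbitrary $\theta > 0$ there exists $\delta > 0$ such that every translate $\tau_y u + \theta \psi$ with $|y| < \delta$ belongs to $\cF(\Omega_{\delta})$, and the same statement applied to $-w$ with the dual subequation $\wt{\cF}$ (which is also fiberegular) produces an analogous perturbation for $(-w)^{\veps}$. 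Taking a supremum over the relevant translates (together with the quadratic correction intrinsic to the sup-convolution) and invoking the maximum property of $\cF$-subharmonics shows that, modulo a $\theta \psi$ correction and restriction to $\Omega_{\delta}$, both $u^{\veps}$ and $(-w)^{\veps}$ are genuinely $\cF$- (respectively $\wt{\cF}$-) subharmonic.

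With both perturbed sup-convolutions quasiconvex and subharmonic for the respective subequations, Theorem \ref{thm:SAT} yields $u^{\veps} + (-w)^{\veps} + 2\theta \psi \in \wt{\cM}(\Omega_{\delta})$. The constant-coefficient (ZMP) for $\wt{\cM}$ on $\Omega_{\delta}$ then controls this quantity by its boundary values on $\partial \Omega_{\delta}$. Passing first to the limit $\veps \to 0^+$ (so that $u^{\veps} \searrow u$ and $(-w)^{\veps} \searrow -w$, with semicontinuous control of the boundary values near $\partial \Omega$), then $\delta \to 0^+$, and finally $\theta \to 0^+$ (so that the $\theta \psi$ corrections vanish), one recovers $z \leq 0$ on $\Omega$ from $z \leq 0$ on $\partial \Omega$, completing the proof. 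The delicate step to execute carefully is the $\veps \to 0^+$ limit along $\partial \Omega_{\delta}$, since the decreasing convergence $u^{\veps} \searrow u$ is only pointwise and one must invoke upper semicontinuity of $u - w$ on $\overline{\Omega}$ together with the hypothesis that $u \leq w$ on $\partial \Omega$.
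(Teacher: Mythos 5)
Your proposal is correct and follows essentially the same monotonicity--duality--fiberegularity scheme as the paper's proof: duality to reduce (CP) to a zero maximum principle for $\cF(\Omega)+\wt{\cF}(\Omega)$, jet addition \eqref{jet_add}, the quasiconvex Subharmonic Addition Theorem via sup-convolution, the Uniform Translation Property supplied by fiberegularity, and the (ZMP) for $\wt{\cM}$ from the strict approximator $\psi$ (\cite[Theorem 6.2]{CHLP21}). The only cosmetic difference is that you apply the (ZMP) to the perturbed sup-convolutions on $\Omega_\delta$ and then pass to the limits $\veps,\delta,\theta \to 0^+$ in the maximum principle, whereas the paper first assembles the full Subharmonic Addition Theorem $\cF(\Omega)+\wt{\cF}(\Omega) \subset \wt{\cM}(\Omega)$ (using the decreasing-limits property) and then invokes the (ZMP) once on $\Omega$.
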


The evolution of this result can be summarized in the following way.

\noindent {\bf (1)} For $\cF_x \equiv \cF \subset \cS(n)$ (constant coefficient pure second order) see \cite{HL09c}, where $\cM = \cP$ and $\psi(x):= \frac{1}{2} |x|^2$. Here one can say 
	\begin{center}
		{\em (CP) holds for all subequations $\cF \subset \cS(n)$ and for all domains $\Omega \subset \subset \R^n$.}
	\end{center} 

\noindent {\bf (2)} For $\cF \subset \Omega \times\cS(n)$ (fiberegular variable coefficient pure second order) see \cite{CP17}, where $\cM = \cP$ and $\psi(x):= \frac{1}{2} |x|^2$. Here one can say 
		\begin{center}
			{\em (CP) holds for all fiberegular subequations $\cF \subset \Omega \times \cS(n)$ and for all domains $\Omega \subset \subset \R^n$.}
		\end{center} 
Of course an interesting case here is an inhomogeneous subequation $\cF$ defined by $F(D^2u) - f(x) \geq 0$. In \cite{HL19a}, assuming a condition called {\em tame} on the operator $F$, (CP) was established. One can show that $F$ tame implies that $\cF$ is fiberegular, so this result is a special case of (2).

\noindent {\bf (3)} For $\cF_x \equiv \cF \subset \R \times \cS(n)$ (constant coefficient gradient free) see \cite[Theorem 13.4]{HL11a} where $\cM = \cQ = \cN \times \cP$ and $\psi(x):= \frac{1}{2} (|x|^2 - R^2), R>>0$. Here one can say 
\begin{center}
	{\em (CP) holds for all subequations $\cF \subset \R \times \cS(n)$ and for all domains $\Omega \subset \subset \R^n$.}
\end{center}
 
\noindent {\bf (4)}  For $\cF \subset \Omega \times (\R \times \cS(n))$ (fiberegular variable coefficient gradient-free) see \cite{CP21}, where $\cM = \cQ = \cN \times \cP$ and $\psi(x):= \frac{1}{2} (|x|^2 - R^2), R>>0$.  Here one can say 
\begin{center}
	{\em (CP) holds for all fiberegular subequations $\cF \subset \Omega \times \R \times \cS(n)$ and for all domains $\Omega \subset \subset \R^n$.}
\end{center} 

\noindent {\bf (5)} For $\cF_x \equiv \cF \subset \cJ^2(\Omega)$ (general constant coefficients) see \cite{CHLP21}, where there is also a complete study of which cones $\textcolor{blue}{\cM}$ admit $\psi$.

\noindent {\bf (6)}
 For the general case $\cF \subset \cJ^2(\Omega)$ (fiberegular) see \cite{CPR21}, where one imports the class of admissible cones $\cM$ from the constant coefficient case.

\subsection{Outline of the proof} The main steps in the proof are the following.

\noindent {\bf Step 1:} First, use {\bf duality} to reformulate (CP) as:
\begin{equation}\tag{CP'}
u + v \leq 0 \ \text{on $\partial \Omega$} \quad \implies \quad u + v \leq 0 \ \text{on $\Omega$}
\end{equation}
for all $u \in \USC(\overline{\Omega}) \cap \cF(\Omega)$ and $v \in \USC(\overline{\Omega}) \cap \wt{\cF}(\Omega)$ (both subharmonic). Define $v :=-w$ and then the equivalence in \eqref{Vsuper3} translates to the equivalence of (CP) and (CP'). Next, note that (CP') is just the zero maximum principle (ZMP) for the sum of $\cF$ and $\wt{\cF}$ subharmonics: 
\begin{equation}\label{ZMP2}
\text{(ZMP)} \quad \quad	  \ z \leq 0 \ \text{on} \ \partial \Omega \ \ \Rightarrow z \leq 0 \ \text{on} \ \partial \Omega,
\end{equation}
$	\forall \, \textcolor{blue}{z \in \USC(\overline{\Omega}) \cap (\cF(\Omega) + \wt{\cF}(\Omega))}$. Thus it remains to prove (ZMP) in \eqref{ZMP2}.
	
\noindent {\bf Step 2 (Jet Addition):} Establish the fundamental  {\em jet addition formula} (\cite[Lemma 4.1.2]{HL13b})
\begin{equation}\label{JAF}
\cF_x + \cM_x \subset \cF_x \ \ \Longrightarrow \ \ \cF_x + \wt{\cF}_x \subset\wt{\cM}_x , \ \ \text{for each} \ x \in X. 
\end{equation}
This formula follows from elementary properties of duality and monotonicity.

\noindent {\bf Step 3:} Establish the {\em Almost Everywhere Theorem} and the quasiconvex version of the {\em Subharmonic Addition Theorem} (see Theorems \ref{thm:AET} and \ref{thm:SAT} to conclude
$$
z = u + v \in  \wt{\cM}(\Omega)
$$
if $u \in \cF(\Omega)$ and $v \in \wt{\cF}(\Omega)$ are locally quasiconvex. This difficult step relies on the Jensen or Slodkowski or Federer Lemmas.

\noindent {\bf Step 4:} Use {\bf fiberegularity} to prove the full {\em Subharmonic Addition Theorem} 
$$
		\cF(\Omega) + \wt{\cF}(\Omega) \subset \wt{\cM}(\Omega).
$$

\noindent {\bf Step 5:} Apply the following constant coefficient result \cite[Theorem 6.2]{CHLP21}.

\begin{thm}[The Zero Maximum Principle for Dual Monotonicity Cones]\label{thm:ZMP}
	Suppose that $\cM$ is a constant coefficient monotonicity cone subequation that admits a $C^2$-strict subsolution $\psi$ on a domain $\Omega \subset \subset \R^n$. Then the {\em zero maximum principle} holds for $\wt{\cM}$ on $\overline{\Omega}$; that is,
	\[\tag{ZMP}
z \leq 0 \ \text{on $\partial \Omega$} \quad \implies \quad z \leq 0 \ \text{on $\Omega$}
	\]
	for all $z \in \USC(\overline{\Omega}) \cap \wt{\cM}(\Omega)$.
\end{thm}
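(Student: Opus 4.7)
The plan is to reduce (ZMP) to a single application of the Definitional Comparison Lemma (Lemma 2.10) with the choice $\cF := \wt{\cM}$. By \eqref{true_dual}, $\wt{\cM}$ is itself a subequation and $\wt{\wt{\cM}} = \cM$, so Definitional Comparison in this form says: if $u \in \USC(\overline{\Omega})$ is $\wt{\cM}$-subharmonic and $v \in \USC(\overline{\Omega}) \cap C^2(\Omega)$ is \emph{strictly} $\cM$-subharmonic on $\Omega$, then $u+v \leq 0$ on $\partial\Omega$ forces $u+v \leq 0$ on $\Omega$. The strict approximator $\psi$ supplied in the hypothesis will play the role of $v$, after a trivial normalization so that its boundary sign is controlled.

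First, I would set $C := \sup_{\overline{\Omega}} \psi < +\infty$ (finite by compactness of $\overline{\Omega}$ and upper semicontinuity of $\psi$) and define $\tilde{\psi} := \psi - C \leq 0$ on $\overline{\Omega}$. Since $J^2_x\tilde{\psi} = J^2_x\psi + (-C,0,0)$ and $(-C,0,0) \in \cN \times \{0\} \times \{0\} \subset \cM_0$, I would use the following observation: for any subequation $\cF$, properties (P) and (N) yield $\Int\,\cF + \cM_0 \subset \Int\,\cF$ (if $B_r(J) \subset \cF$, then $B_r(J + J') = J' + B_r(J) \subset J' + \cF \subset \cF$ whenever $J' \in \cM_0$). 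Applied to $\cM$, this shows $\tilde{\psi}$ remains strictly $\cM$-subharmonic. Moreover, since $\cM$ is a cone with vertex at the origin, one has $\veps \cdot \Int\,\cM = \Int\,\cM$ for every $\veps > 0$, so $\veps \tilde{\psi}$ is strictly $\cM$-subharmonic as well and still satisfies $\veps\tilde{\psi} \leq 0$ on $\overline{\Omega}$.

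Now Definitional Comparison applies with $u := z \in \wt{\cM}(\Omega)$ and $v := \veps\tilde{\psi}$. From $z \leq 0$ on $\partial\Omega$ and $\veps\tilde{\psi} \leq 0$ on $\overline{\Omega}$, we obtain $u + v \leq 0$ on $\partial\Omega$, hence $z + \veps\tilde{\psi} \leq 0$ on $\Omega$, i.e., $z \leq -\veps\tilde{\psi}$ on $\Omega$. Fixing $x \in \Omega$ and letting $\veps \to 0^+$ (using that $\tilde{\psi}(x) \in \R$ because $\psi \in C^2(\Omega)$) yields $z(x) \leq 0$, which is the desired (ZMP). I expect no real obstacle here: the deep content sits inside Definitional Comparison, which is already available in \cite{CHLP21}. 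Fiberegularity, the Almost Everywhere Theorem, and quasiconvex approximation are \emph{not} needed at this stage and intervene only later, when one bootstraps this constant-coefficient (ZMP) to the variable-coefficient comparison Theorem 3.1 via the monotonicity--duality--fiberegularity method.
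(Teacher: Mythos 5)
Your proof is essentially the paper's own argument: both reduce (ZMP) to a single application of the Definitional Comparison Lemma with $\cF = \wt{\cM}$ (hence $\wt{\cF} = \wt{\wt{\cM}} = \cM$), exploit that $\cM$ is a cone so that $\veps\psi$ stays strictly $\cM$-subharmonic, and conclude by sending $\veps \to 0^+$ pointwise; the only difference is bookkeeping, since you normalize $\psi$ by subtracting its supremum while the paper instead slides $z$ down by $m>0$ (sliding property), absorbs $\veps\psi$ on the compact boundary for $\veps$ small, and then lets $m \to 0^+$. One tiny correction: subtract $\max\{\sup_{\overline{\Omega}}\psi,\,0\}$ rather than $C=\sup_{\overline{\Omega}}\psi$, because when $C<0$ the jet $(-C,0,0)$ has positive $r$-entry and does not lie in $\cM_0$, so your justification that $\tilde{\psi}$ remains strictly $\cM$-subharmonic breaks down (though in that case $\psi\leq 0$ already and no normalization is needed).
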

\begin{proof}
$\wt{\cM}$ is a (constant coefficient) subequation and hence satisfies the {\em sliding property}
	$$
	z - m \in \wt{\cM}(\Omega) \quad \text{for each} \quad 	m \in [0, +\infty).
	$$
Since $z- m < 0$ on $\partial \Omega$ compact
	$$
z - m + \veps \psi  \leq 0 \ \text{on} \ \partial \Omega \quad \text{for each} \ \veps \ \text{sufficiently small}.
	$$
 Since $z - m \in \wt{\cM}(\Omega)$ and since $\veps \psi \in C(\overline{\Omega}) \cap C^2(\Omega)$ is strictly $\cM$-subharmonic, by the {\em definitional comparison} Lemma \ref{lem:DC} (with $\cF = \wt{\cM}$ and $\wt{\cF} = \wt{\wt{\cM}} = \cM$) one has
	$$
z - m + \veps \psi  \leq 0 \ \text{on} \ \Omega \quad \text{for each} \ \veps \ \text{sufficiently small}.
	$$
	Passing to the limit for $\veps \to 0^+$, and then $m \to 0^+$ yields $z \leq 0$ on $\Omega$.
\end{proof}

The utility of the General Comparison Theorem \ref{thm:GCT} is greatly facilitated by the detailed study of monotonicity cone subequations in \cite{CHLP21}, which we briefly review.  There is a three parameter {\em fundamental family} of monotonicity cone subequations consisting of
$$
 \cM(\gamma, \cD, R):= \left\{ (r,p,A) \in \cJ^2: \ r \leq - \gamma |p|, \ p \in \cD, \ A \geq \frac{|p|}{R}I \right\}
$$
where
$$ 
\gamma \in [0, + \infty), R \in (0, +\infty] \ \text{and} \ \cD \subseteq \R^n,
$$ 
where $\cD$ is a {\em directional cone}; that is, a closed convex cone with vertex at the origin and non-empty interior (see Definition 5.2 and Remark 5.9 of \cite{CHLP21}). The family is fundamental in the sense that for any monotonicity cone subequation, there exists an element $\cM(\gamma, \cD, R)$ of the familly with $\cM(\gamma, \cD, R) \subset \cM$ (see Theorem 5.10 of \cite{CHLP21}. Hence if $\cF$ is an $\cM$-monotone subequation, then it is $\cM(\gamma, \cD, R)$-monotone for some triple $(\gamma, \cD, R)$. Moreover, from Theorem 6.3 of \cite{CHLP21}, given any element $\cM = \cM(\gamma, \cD, R)$ of the fundamental family, one knows for which domains $\Omega \subset \subset \R^n$ there is a $C^2$-strict $\cM$-subharmonic and hence for which domains $\Omega$ one has the (ZMP) for $\wt{\cM}$-subharmonics according to Theorem \ref{thm:ZMP}. There is a simple dichotomy. If $R = + \infty$, then arbitrary bounded domains $\Omega$ may be used, while in the case of $R$ finite, any $\Omega$ which is contained in a translate of the truncated cone $\cD_R := \cD \cap B_R(0)$.

\section{The correspondence principle}\label{sec:correspndence}

In this section, we discuss structural conditions on a given proper elliptic operator $F$ with domain $\cG \subset \cJ^2(X)$ which ensure that the constraint set $\cF$ defined by the compatibility relation \eqref{relation}
\begin{equation}\label{relation1}
\cF:= \{ (x,J) \in \cG: \ F(x,J) \geq 0 \}\}
\end{equation}
satisfies the two conditions needed for the  the Correspondence Principle of Theorem \ref{thm:corresp_gen}.  We recall that the these two conditions are:
\begin{equation}\label{correspondence1}
\cF \ \text{defined by} \ \eqref{relation1} \ \text{is a subequation (in the sense of Definition} \ \ref{defn:subeq})
\end{equation}
and compatibility \eqref{compatibility1} between $\cF$ and $F$:
\begin{equation}\label{correspondence2}
\Int \, \cF = \{ (x,J) \in \cG: \ F(x,J) > 0\},
\end{equation} 
or equivalently
\begin{equation}\label{correspondence3}
\partial  \cF = \{ (x,J) \in \cF: \ F(x,J) = 0\}.
\end{equation}

$\cF$ defined by \eqref{relation1} will be a subequation if it satisfies the three properties of positivity (P), negativity (N) and topological stability (T). The first two (P) and (N) are equivalent to the (fiberwise) monotnicity property that for each $x \in X$
$$
(r,p,A) \in \cF_x \ \ \Rightarrow \ \ (r + s, p, A + P) \in \cF_x, \ \ \forall \, s \leq 0 \ \text{in} \ \R, P \geq 0 \ \text{in} \ \cS(n),
$$
which clearly follows from the same monotonicity property for the domain $\cG$ and and the proper ellipticity of $F$ on $\cG$ (see \eqref{PEO}):
$$
F(x,r + s, p, A +P) \geq F(x,r,p,A), \ \ \forall (r,p,A) \in \cG_x, s \leq 0 \ \text{in} \ \R \ \text{and} \ P \geq 0 \ \text{in} \ \cS(n).
$$
This leaves the topological property (T). Recall that it requires the  three conditions
\begin{equation}\tag{T1} 
\cF = \overline{\Int \, \cF};
\end{equation}
\begin{equation}\tag{T2} 
\cF_x = \overline{\Int \, \left( \cF_x \right)}, \ \ \forall \, x \in X;
\end{equation}
\begin{equation}\tag{T3} \left( \Int \, \cF \right)_x = \Int \, \left( \cF_x \right), \ \ \forall \, x \in X.
\end{equation}
In the constant coefficient case, property (T) reduces to property (T1). In the gradient free case, one can show that property (T1) follows from properties (P) and (N) since $\cF$ is closed. In the general constant coefficient case, a sufficent condition for (T1) is that $\cF$ is closed and is $\cM$-monotone for some monotonicity cone subequation (see Proposition 4.7 of \cite{CHLP21}). $\cF$ defined by \eqref{relation1} is closed by the continuity of $F$. Hence, if $(F, \cG)$ is a constant coefficient $\cM$-monotone pair, then $\cF$ defined by \eqref{relation1} is indeed a subequation.

In the variable coefficient case, assuming that $(F, \cG)$ is an $\cM$-monotone pair, then the argument above (fiberwise) yields the property (T2). This leaves properties (T1) and (T3). It is not hard to see that if $\cF$ is closed, then properties (T2) plus (T3) imply (T1) (see Proposition A.2 of \cite{CPR21}). Hence for a $\cM$-monotone pair $(F, \cG)$, the constraint set $\cF$ defined by \eqref{relation1} will be a subequation if $\cF$ is closed and satisfies (T3). Moreover, since the inclusion $ \left( \Int \, \cF \right)_x \subset \Int \, \left( \cF_x \right)$ is automatic for each $x \in X$, (T3) reduces to the revese inlcusion, which holds provided that $\cF$ is $\cM$-monotone and fiberegular in the sense of Defintion \ref{defn:fibereg}. This fact is proved in Proposition A.5 of \cite{CPR21}. Moreover, as shown in Theorem 6.1 of \cite{CPR21}, $\cF$ will be fiberegular if $\cG$ is fiberegular provided that $F$ satisfies a mild {\em regularity condition} (see \eqref{FReg} below). In addition, fiberegularity of $\cF$ ensures that $\cF$ is closed (see Proposition A.6 of \cite{CPR21}). We collect some of these observations in the following Lemma.

\begin{lem}\label{lem:subequation} Suppose that $(F, \cG)$ is an $\cM$-monotone operator-subequation pair for some monotonicity cone subequation, with $\cG = \cJ^2(X)$ or $\cG \subsetneq \cJ^2(X)$ a fiberegular subequation. Suppose that $(F, \cG)$ satisfies the regularity condition: for some fixed $J_0 \in \Int \, \cM$, given $\Omega \subset \subset X$ and $\eta > 0$, there exists $\delta = \delta(\eta, \Omega) > 0$ such that
	\begin{equation}\label{FReg}
	F(y, J + \eta J_0) \geq F(x,J), \ \ \forall \, x,y \in \Omega \ \text{with} \ |x - y| < \delta.
	\end{equation}
Then the constraint set $\cF$ defined by \eqref{relation1} is a (fiberegular $\cM$-monotone)	subequation.
	\end{lem}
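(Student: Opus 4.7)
\emph{Proof proposal.} The plan is to verify directly each of the defining properties of a subequation---positivity (P), negativity (N), and the three topological stability conditions (T1)--(T3)---together with fiberegularity. The discussion preceding the statement already outlines the logical flow: the monotonicity properties follow quickly from $\cM$-monotonicity of the pair, while the topological properties all flow from fiberegularity, which is the real substance of the result. Accordingly I would proceed in three stages: (i) $\cM$-monotonicity of $\cF$ (hence (P), (N)) together with (T2); (ii) fiberegularity of $\cF$; (iii) closedness of $\cF$ and the properties (T3) and (T1) deduced from fiberegularity.

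For stage (i), given $(x,J) \in \cF$ and $J' \in \cM$, the $\cM$-monotonicity of $\cG$ places $(x, J+J') \in \cG$, while the $\cM$-monotonicity of $F$ built into the definition of an $\cM$-monotone pair gives $F(x, J+J') \geq F(x, J) \geq 0$, so $(x, J+J') \in \cF$. Since any monotonicity cone subequation $\cM$ contains $\cM_0 = \cN \times \{0\} \times \cP$, this delivers (P) and (N) at once. For (T2), I would use that $J_0 \in \Int \cM$: for every $J \in \cF_x$ and every $t > 0$, a small ball about $tJ_0$ lies in $t\cM \subset \cM$, so translating by $J$ and applying the $\cM$-monotonicity just established places a ball about $J + tJ_0$ inside $\cF_x$; hence $J + tJ_0 \in \Int(\cF_x)$ and $J + tJ_0 \to J$ as $t \to 0^+$. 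The reverse inclusion $\overline{\Int(\cF_x)} \subset \cF_x$ follows because $\cF_x = \cG_x \cap \{J : F(x, J) \geq 0\}$ is closed ($\cG_x$ is closed, trivially or as a subequation fiber, and $F(x, \cdot)$ is continuous).

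For stage (ii), the main step, I would invoke Theorem~6.1 of \cite{CPR21} directly: its hypotheses are precisely fiberegularity of $\cG$ together with the regularity condition \eqref{FReg} on $F$, and its conclusion is fiberegularity of $\cF$ (in the unconstrained case $\cG = \cJ^2(X)$ the fiberegularity of $\cG$ is automatic). For stage (iii), Proposition~A.6 of \cite{CPR21} deduces closedness of $\cF$ from its fiberegularity; Proposition~A.5 of \cite{CPR21}, whose hypotheses are $\cM$-monotonicity and fiberegularity, supplies the non-automatic inclusion $\Int(\cF_x) \subset (\Int \cF)_x$, yielding (T3); and finally Proposition~A.2 of \cite{CPR21} combines (T2), (T3), and closedness to produce (T1). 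All six defining conditions being in place, $\cF$ is then a fiberegular $\cM$-monotone subequation.

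The main obstacle is concentrated in stage (ii): everything else is either immediate from the monotonicity assumptions or is a direct application of structural lemmas from \cite{CPR21}. The regularity condition \eqref{FReg} is tailored precisely so that a translation by $\eta J_0$ in the jet variable simultaneously compensates for the variation of $\cG_x$ in $x$ (controlled by fiberegularity of $\cG$) and the variation of $F$ in $x$; this should let the Hausdorff continuity of $x \mapsto \cG_x$ pass through the intersection with the superlevel set $\{F \geq 0\}$ to deliver Hausdorff continuity of $x \mapsto \cF_x$.
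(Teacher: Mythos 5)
Your proposal is correct and follows essentially the same route as the paper: the fiberwise monotonicity argument gives (P), (N) and (T2), while fiberegularity of $\cF$ comes from Theorem 6.1 of \cite{CPR21} via the regularity condition \eqref{FReg}, closedness from Proposition A.6, (T3) from Proposition A.5, and (T1) from Proposition A.2 — exactly the chain of observations the paper assembles before stating the lemma. The only difference is that you spell out the (T2) and $\cM$-monotonicity details slightly more explicitly than the paper does, which is harmless.
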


Finally, we discuss structural conditions on a proper elliptic operator $F$ with domain $\cG \subset \cJ^2(X)$ for which the constraint set $\cF$ defined by \eqref{relation1} satisfies compatibility \eqref{correspondence2} (or equivalently \eqref{correspondence3}). In the situation of Lemma \ref{lem:subequation}, which ensures that $\cF$ defined by \eqref{relation1} is a subequation, by the topological property (T3) it suffices to have \eqref{correspondence2} fiberwise; that is,
\begin{equation}
	\Int \, \cF_x = \{ J \in \cG_x: \ F(x,J) > 0\}, \ \ \forall \, x \in X.
\end{equation}
This condition is often easily checked for a given pair $(F, \cG)$ which determines $\cF$ by checking that $F(x,J) = 0$ for $J \in \partial \cF_x$ and using some strict monotonicity such as: for each $x \in X$ with some fixed $J_0 \in \Int \, \cM$ there exists $t_0 > 0$ such that
\begin{equation}\label{SM1}
	F(x,J + tJ_0) > F(x,J), \ \ \forall t \in (0, t_0), \forall \, J \in \partial \cF_x.
\end{equation}
Compatibility in this situation of a {\em homogeneous equation} $F(J^2u) = 0$ is relatively simple because one need only pay attention to $F$ in a neighborhood of the zero locus of $F$ (with domanin $\cG$). 

More structure is required if one would like to treat the {\em inhomogeneous equation}
\begin{equation}\label{InHomEq}
	F(J^2u) = \psi, \ \ \psi \in C(X)
\end{equation}
for a given constant coefficient operator $F$. This is true even for constant sources $\psi = c$. This case has been studied exensively in \cite{CHLP21}, which we now review. There the domain $\cG$ was denoted instead by $\cF$, which we will also do below. In the constrained case, where $\cF \subset \cJ^2$ is a (constant coefficient) subequation, {\bf {\em compatibility}} is defined by the two conditions 
\begin{equation}\label{CCC1}
\inf_{\cF} F \ \ \text{is finite (and denoted by $c_0$)}
\end{equation}
and
\begin{equation}\label{CCC2}
\partial \cF = \{ J \in \cF: \ F(J) = c_0 \}.
\end{equation}
Given an operator-subequation pair $(F, \cF)$, the values $c \in F(\cF)$ are called {\em admissible levels of $F$}, since otherwise the level set $\{ F = c \}$ is empty.

More is needed in order to treat the inhomogeneous equation $F(J^2u)  = c$ for all of the admissible levels. In order to avoid some obvious pathologies, one must assume that the operator $F \in C(\cF)$ is {\bf {\em topologically tame}}; that is, for each admissible level $c \in F(\cF)$, 
\begin{equation}\label{top_tame}
\text{the level set} \  \cF(c) := \{ J \in \cF: \ F(J) = c \} \ \text{has empty interior}.
	\end{equation}
This condition serves an additional purpose. Namely, if $(F, \cF)$ is a proper elliptic operator-subequation pair with $F$ topologically tame, then for every admissible level $c \in F(\cF)$ the upper level set
\begin{equation}\label{ULS}
	\cF_c := \{ J \in \cF: \ F(J) \geq c \}
\end{equation}
satisfies the topological property (T). Hence each $\cF_c$ is a subequation since properties (P) and (N) are encoded by the proper ellipticity. The obvious pathologies eliminates by topological tameness of $F$ are explained in \cite[section 11.1]{CHLP21}. For example, if some admissible level set $\cF(c)$ has non-empty interior, then one has many counterexamples for comparison by considering perturbations $v + \varphi$ of a 
local $C^2$ solution to $F(J^2v) = c$ with $\varphi$ smooth, compactly supported and with small $C^2$-norm.

Some strict monotonicity for the operator $F$ provides a convenient structual condition on the operator which eliminates such pathologies. More precisely, for constant coefficient compatible pairs $(F, \cF)$ which are $\cM$-monotone for some monotonicity cone subequation $\cM$, topological tameness \eqref{top_tame} is equivalent the following structural condition of {\bf {\em strict $\cM$-monotonicity}} on $F$:
\begin{equation}\label{SMM}
\exists \, J_0 \in \Int \, \cM \ \ \text{such that} \ \ F(J + tJ_0) > F(J)  \ \ \text{for each} \ J \in \F \ \text{and} \ t>0.
\end{equation}
In the gradient-free case this monotonicity is the weakest possible notion of being strictly proper elliptic. Moreover, these equivalent notions \eqref{top_tame} and \eqref{SMM} are also equavalent to any one of the following three conditions  (see Theorem 11.10 of \cite{CHLP21}):
\begin{itemize}
	\item[1)] $F(J + J_0) > F(J)$ for each $J \in \cF$ and each $J_0 \in \Int \,\cM$;

	\item[2)] $\{ J \in \F: F(J) > c\} = \Int \, \F_c$ for each admissible level $c \in F(\F)$;
\item[3)] $\F(c) = \F_c \cap \left( - \wt{\F}_c \right)$ for each admissible level $c \in F(\F)$.
\end{itemize}

Combining compatibility with strict $\cM$-monotonicity, one has a correspondence principle for the solutions of the inhomogeneous equation \eqref{InHomEq}, which are precisely the $\cF_{\psi}$-harmonics for the subequation with fibers
\begin{equation}\label{InHomSubEq}
	\cF_{\psi(x)} = \{ F \in \cF: \ F(J) \geq \psi(x) \}, \ \ x \in X.
\end{equation}
More precisely, one has the following result whose proof follows directly from the proof of the constant source case $\psi = c$ given in Theorem 11.13 of \cite{CHLP21}.

\begin{thm}[Correspondence principle]\label{thm:Correspondence_InHom} Suppose that $(F, \cF)$
	is a compatible $\cM$-monotone (operator-subequation) pair for some montotoncity cone subequation $\cM$ with $F$ strictly $\cM$-monotone in the sense \eqref{SMM}. Then, for any $\psi \in C(X)$ taking values in $F(\cF)$, a function $u \in C(X)$ is an $\cF$-admissibile solution of the equation $F(J^2u) = \psi$ in $X$ if and only if $u$ is $\cF_{\psi}$-harmonic in $X$. In particular, for $u \in \USC(X)$ and $w \in \LSC(X)$ one has
	$$
	\mbox{$u$ is an $\cF$-admissible subsolution of $F(J^2u) = \psi \ \Longleftrightarrow \ u$ is $\cF_{\psi}$-subarmonic}
	$$ 
	and 
	$$
	\mbox{$w$ is an $\cF$-admissible supersolution of $F(J^2u) = \psi \ \Leftrightarrow \ -w$ is $\wt{\cF}_{\psi}$-subarmonic}.
	$$
\end{thm}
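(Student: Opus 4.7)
The plan is to reduce the theorem to two pointwise equivalences at each $x \in X$: the subsolution equivalence and the supersolution equivalence. The solution statement then follows by taking their conjunction, since $\cF_\psi$-harmonicity of $u$ is defined as $\cF_\psi$-subharmonicity of $u$ together with $\wt{\cF}_\psi$-subharmonicity of $-u$. Only one of the two pointwise equivalences requires real work; the subsolution direction is essentially tautological once definitions are unpacked, while the supersolution direction is where the strict $\cM$-monotonicity hypothesis is genuinely used.

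For the subsolution equivalence, I would simply unpack Definitions \ref{defn:AVS}(a) and \ref{defn:Fsub}. Fix $x \in X$ and $J \in J^{2,+}_x u$; being an $\cF$-admissible subsolution of $F(J^2u) = \psi$ at $x$ says exactly $J \in \cF$ and $F(J) \geq \psi(x)$, which is precisely the condition $J \in \cF_{\psi(x)}$ defining $\cF_\psi$-subharmonicity at $x$. Nothing beyond definitions is needed here.

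For the supersolution equivalence, I would use the identity $J^{2,+}_x(-w) = -J^{2,-}_x w$ together with the fiberwise dual relation $\wt{\cF}_{\psi(x)} = (-\Int \cF_{\psi(x)})^c$ to reduce the statement ``$-w$ is $\wt{\cF}_\psi$-subharmonic at $x$'' to: every $J \in J^{2,-}_x w$ satisfies $J \notin \Int \cF_{\psi(x)}$. The key step, which is the main technical point of the proof, is the identity
\[
\Int \cF_{\psi(x)} = \{ J \in \cF : F(J) > \psi(x)\},
\]
which is precisely equivalence 2) in the list preceding the theorem and which holds because $\psi(x)$ is an admissible level for the strictly $\cM$-monotone pair $(F, \cF)$. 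Negating this identity gives: $J \notin \Int \cF_{\psi(x)}$ iff $J \notin \cF$ or ($J \in \cF$ and $F(J) \leq \psi(x)$), which matches Definition \ref{defn:AVS}(b) of $\cF$-admissible supersolution exactly.

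The main obstacle I anticipate is not the equivalences themselves but the implicit prerequisite that $\cF_\psi$ be a bona fide subequation on $X$, so that $\cF_\psi$-(sub)harmonicity is well posed. Each fiber $\cF_{\psi(x)}$ is a constant coefficient subequation by the tameness-to-(T) implication already recorded, and $\cM$-monotonicity transfers from $(F,\cF)$ to $\cF_\psi$ directly. The only subtle point is the variable coefficient property (T3), which requires fiberegularity of the map $x \mapsto \cF_{\psi(x)}$; this follows by combining continuity of $\psi$ with strict $\cM$-monotonicity of $F$ to verify the regularity condition \eqref{FReg} in Lemma \ref{lem:subequation}, which then supplies (T). Once $\cF_\psi$ is recognized as a subequation, the two equivalences above assemble into the stated correspondence principle, mirroring the proof of the constant source case in \cite[Theorem 11.13]{CHLP21}.
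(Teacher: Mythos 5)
Your two pointwise equivalences are correct and are in substance exactly the paper's proof: the paper simply invokes the constant source case of \cite[Theorem 11.13]{CHLP21} applied fiberwise at the admissible level $c=\psi(x)$, which is what you reconstruct --- the subsolution equivalence is definitional, and the supersolution equivalence follows from $J^{2,+}_x(-w)=-J^{2,-}_x w$, the fiberwise dual formula, and the identity $\Int\,\cF_c=\{J\in\cF:\ F(J)>c\}$ (equivalence 2), valid at every admissible level because strict $\cM$-monotonicity \eqref{SMM} is equivalent to topological tameness \eqref{top_tame}). Note that each fiber $\cF_{\psi(x)}$ is a genuine constant coefficient subequation for exactly this reason, and the dual $\wt{\cF}_\psi$ in the statement is to be read fiberwise as in \eqref{InHomSubEq} and \eqref{dual_fiber}; so the theorem is entirely a fiberwise statement and your first three paragraphs already prove it.

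The flaw is in your last paragraph: the claim that continuity of $\psi$ together with strict $\cM$-monotonicity verifies the regularity condition \eqref{FReg}, hence fiberegularity of $x\mapsto\cF_{\psi(x)}$, is not justified and is false in general. With $F(x,J):=F(J)-\psi(x)$, condition \eqref{FReg} requires $F(J+\eta J_0)-F(J)\geq \psi(y)-\psi(x)$ for \emph{all} $J\in\cF$, i.e.\ a uniform positive gap $\inf_{J\in\cF}\bigl(F(J+\eta J_0)-F(J)\bigr)>0$; the strict monotonicity \eqref{SMM} gives only a pointwise strict inequality, which can degenerate at infinity. A concrete counterexample is the special Lagrangian potential operator of Example \ref{exe:SLPE}: it is strictly $\cP$-monotone, yet for continuous phases crossing the special values the fiber map $x\mapsto\{A:\ G(A)\geq\theta(x)\}$ fails Hausdorff continuity, precisely the point emphasized there. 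This is the distinction between ``tame'' (quantitative, which does imply fiberegularity) and ``topologically tame'' (qualitative, equivalent to \eqref{SMM}). Fortunately, that extra step is not needed for the present theorem, since no total-space property (T) of $\cF_\psi$ enters the fiberwise equivalences; fiberegularity of $\cF_\psi$ becomes relevant only later, when one wants comparison for the inhomogeneous equation, and there it must be imposed or derived from stronger hypotheses, not deduced from \eqref{SMM}.
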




\begin{thebibliography}{99}
	\bibitem{BM06} M.\ Bardi and P.\ Mannucci, {\em On the Dirichlet problem for non-totally degenerate fully nonlinear elliptic equations}, Commun.\ Pure Appl.\ Anal.\ {\bf 73} \ (2006), 709--731.
	\bibitem{BB01} G.\ Barles and J.\ Busca, {\em Existence and comparison results for fully nonlinear degenerate  elliptic equations without zeroth-order term}, Comm.\ Partial Differential Equations\ {\bf 26} \ (2001), 2323--2337.
	\bibitem{BNV94} H.\ Berestycki, L.\ Nirenberg and S.\ Varadhan, {\em The principle eigenvalue and maximum principle for second order elliptic operators in general domains},
	Comm.\ Pure Appl.\ Math.\ {\bf 47} \ (1994), 47--92.
	\bibitem{BGI18} I.\ Birindelli, G.\ Galise and H.\ Ishii, {\em A family of degenerate elliptic operators: maximum principle and its consequences}, Ann.\ Inst.\ H.\ Poincar\`e Anal.\ Non Lin\`eaire\ {\bf 35}\ (2018), 417–-441.
	\bibitem{BP21} I.\ Birindelli and K.\ R.\ Payne, {\em Principal eigenvalues for $k$-Hessian operators by maximum principle methods},  Math.\ Eng.\ {\bf 3} \ (2021), Paper No.\ 021, 37 pp.
	\bibitem{CNS85} L.\ Caffarelli, L.\ Nirenberg and J.\ Spruck, {\em The Dirichlet problem for nonlinear second-order elliptic equations III. Functions of the eigenvalues of the Hessian}, Acta Math.\ {\bf 155}\ (1985), 261--301.	
	\bibitem{CC} L.\ Caffarelli and X.\ Cabr\'{e}, {\em Fully Nonlinear Elliptic Equations}, American Mathematical Society Colloquiium Publications Vol.\ 43, American Mathematical Society, Providence, RI, 1995.
	\bibitem{CY86} S.Y.\ Cheng and S.T.\ Yau, {\em Complete affine hypersurfaces. Part I. The completeness of affine metrics}, Comm.\ Pure Appl.\ Math.\ {\bf 39} \ (1986), 839--866.
	\bibitem{Chu21} J.\ Chu, {\em Quantitative stratification of $F$-subharmonic functions}, Comm.\ Geom.\  Anal.\ Math.\ {\bf 29} \ (2021), 1335--1389.
	\bibitem{CHLP21} M.\ Cirant,  F.R.\ Harvey, H.B.\ Lawson, Jr.and K.R.\ Payne, {\em Comparison principles by monotonicity and duality for constant coefficient nonlinear potential theory and PDEs}, Annals of Mathematics Studies, PrincetonUniversity Press, Princeton, NJ, to appear; {\tt arXiV: 2009.01611v1} 170 pages, published online 3 Sep 2020.
	\bibitem{CP17} M.\ Cirant and K.R.\ Payne, {\em On viscosity solutions to the Dirichlet problem for elliptic branches of nonhomogeneous fully nonlinear equation}, Publ.\ Mat.\ {\bf 61}\ (2017), 529--575.
	\bibitem{CP21} M.\ Cirant and K.R.\ Payne, {\em Comparison principles for viscosity solutions of elliptic branches of fully nonlinear equations independent of the gradient}, to appear in Math.\ Eng.\ {\bf 3} \ (2021), Paper No.\ 045, 45 pp.
	\bibitem{CPR21} M.\ Cirant, K.R.\ Payne and D.F.\ Radaelli, {\em Comparison principles for nonlinear potential theory and PDEs with continuous coefficients and sufficient monotonicity}, preprint, 2022.
	\bibitem{CIL92} M.G.\ Crandall, H.\ Ishii and P-L.\ Lions, {\em User's guide to viscosity solutions of second order partial differential equations}, Bull.\ Amer.\ Math.\ Soc.\ {\bf 27}\ (1992), 1--67.
	\bibitem{DF14} G.\ De Philippis and A.\ Figalli, {\em The Monge-Amp\`{e}re equation and its link to optimal transportation}, Bull.\ Amer.\ Math.\ Soc.\ (N.S.)\ {\bf 51}\  (2014), 527--580.
	\bibitem{DDT19} S.\ Dinew, H-S.\ Do and T.D.\ T\^{o}, {\em A viscosity approach to the Dirichlet problem for degenerate complex Hessian-type equations}, Anal.\ PDE \ {\bf 12} \ (2019),  505--535.
		\bibitem{F22} F.\ Forsterni\v{c},  {\em Every smoothly bounded $p$-convex domain in $\R^n$ admits a $p$plurisubharmonic defining function},  {\tt arXiV: 2111.081113v2} 8 pages, published online 12 Jan 2022.
	\bibitem{Ga59} L.\ G{\aa}rding, {\em An inequality for hyperbolic polynomials}, J.\ Math. Mech.\ {\bf 8}\ (1959), 957--965.
	\bibitem{GT} D.\ Gilbarg and N.S.\ Trudinger, {\em Elliptic Partial Differential Equations of Second Order, Second Edition}, Grundlehren der Mathematischen Wissenschaften, Vol.\ 224, Springer-Verlag, Berlin, 1983.
	\bibitem{HL82}  F.R.\ Harvey and H.B.\ Lawson, Jr., {\em Calibrated geometries}, Acta Math.\ {\bf 148}\ (1982),  47--157.
		\bibitem{HL09a} F.R.\ Harvey and H.B.\ Lawson, Jr., {\em An introduction to potential theory in calibrated geometry}, Amer.\ J.\ Math.\ {\bf 131}\ (2009), 893--944.
		\bibitem{HL09b} F.R.\ Harvey and H.B.\ Lawson, Jr., {\em Duality of positive currents and plurisubharmonic functions in calibrated geometry}, Amer.\ J.\ Math.\ {\bf 131}\ (2009), 1211--1239.
	\bibitem{HL09c} F.R.\ Harvey and H.B.\ Lawson, Jr., {\em Dirichlet duality and the nonlinear Dirichlet problem}, Comm.\ Pure Appl.\ Math.\ {\bf 62}\ (2009), 396--443.
	\bibitem{HL10} F.R.\ Harvey and H.B. Lawson, Jr., {\em Hyperbolic polynomials and the Dirichlet problem}, {\tt arXiv:0912.5220v2}, 19 March 2010, 1--48.
	\bibitem{HL11a} F.R.\ Harvey and H.B.\ Lawson, Jr., {\em Dirichlet duality and the nonlinear Dirichlet problem on Riemannian manifolds}, J.\ Differential Geom.\ {\bf 88}\ (2011), 395--482.
		\bibitem{HL11b} F.R.\ Harvey and H.B.\ Lawson, Jr., {\em Plurisubharmonicity in a general geomteric context}, in {\em Geometry and analysis}, No.\ 1, 363--402, Adv.\ Lect.\ Math.\ (ALM)\ {\bf 17}, Int.\ Press, Somerville, MA, 2011.
			\bibitem{HL12} F.R.\ Harvey and H.B. Lawson, Jr., {\em Geometric plurisubharmonicity and convexity: an introduction}, Adv.\ Math.\ {\bf 230}\ (2012), 2428--2456.
	\bibitem{HL13a} F.R.\ Harvey and H.B. Lawson, Jr., {\em G\aa rding's theory of hyperbolic polynomials}, Comm.\ Pure Appl.\ Math.\ {\bf 66}\ (2013), 1102--1128.
	\bibitem{HL13b} F.R.\ Harvey and H.B. Lawson, Jr., {\em Existence, uniqueness and removable singularities for nonlinear partial differential equations in geometry}, 103--156 in Surveys in Differential Geometry, Vol.\ 18, International Press of Boston, Inc.\, Sommerville MA, 2013.
		\bibitem{HL13c} F.R.\ Harvey and H.B. Lawson, Jr., {\em $p$-convexity, $p$-plurisubharmonicity and the Levi problem}, Indiana Univ.\ Math.\ J.\ {\bf 62}\ (2013), 149--169.
		\bibitem{HL14a} F.R.\ Harvey and H.B. Lawson, Jr., {\em Removable singularities for nonlinear subequations}, Indiana Univ.\ Math.\ J.\ {\bf 63}\ (2014), 1525--1552.
	\bibitem{HL14b} F.R.\ Harvey and H.B. Lawson, Jr., {\em The restriction theorem for fully nonlinear subequations}, Ann.\ Inst.\ Fourier (Grenoble)\ {\bf 64}\ (2014), 217--265.
		\bibitem{HL14c} F.R.\ Harvey and H.B. Lawson, Jr., {\em The equivalence of viscosity and distributional solutions for convex subequations - a strong Bellman principle}, Bull.\ Braz.\ Math.\ Soc.\ (N.S)\ {\bf 44}\ (2014), 621--652.
		\bibitem{HL15} F.R.\ Harvey and H.B. Lawson, Jr., {\em Potential theory on almost complex manifolds}, Ann.\ Inst.\ Fourier (Grenoble)\ {\bf 65}\ (2015), 171--210.
	\bibitem{HL16a} F.R.\ Harvey and H.B. Lawson, Jr., {\em Notes on the differentiation of quasi-convex functions}, {\tt arXiv:1309.1772v3}, 30 July 2016, 1--17.
	\bibitem{HL16b} F.R.\ Harvey and H.B. Lawson, Jr., {\em The ae theorem and addition theorems
		for quasi-convex functions}, {\tt arXiv:1309.1770v3}, 30 July 2016, 1--12.
	\bibitem{HL16c} F.R.\ Harvey and H.B.\ Lawson, Jr., {\em Characterizing the strong maximum principle for constant coefficient subequations}, Rend.\ Mat.\ Appl.\ (7) {\bf 37}\ (2016), 63-–104.
		\bibitem{HL16d} F.R.\ Harvey and H.B.\ Lawson, Jr., {\em The Dirichlet problem with prescribed interior singularities}, Adv.\ Math.\ {\bf 303}\ (2016), 1319-–1357.
		\bibitem{HL17a} F.R.\ Harvey and H.B. Lawson, Jr., {\em Tangents to subsolutions: existence and uniqueness, Part II},  J.\ Geom.\ Anal.\ {\bf 27} \ (2017), 2190-2223.
	\bibitem{HL17b} F.R.\ Harvey and H.B.\ Lawson, Jr., {\em Lagrangian Potential theory and a Lagrangian equation of Monge-Amp\`{e}re type}, 217-258, in Surveys in Differential Geometry, Vol.\ 22, International Press of Boston, Inc., Sommerville, 2017.
	\bibitem{HL18a} F.R.\ Harvey and H.B. Lawson, Jr., {\em Tangents to subsolutions: existence and uniqueness, Part I},  Ann.\ Fac.\  Sci.\ Toulouse Math.\ (6) {\bf 27}\ (2018), 777--848.
	\bibitem{HL19a} F.R.\ Harvey and H.B. Lawson, Jr., {\em The inhomogeneous Dirichlet Problem for natural operators on manifolds}, Ann.\ Inst.\ Fourier (Grenoble)\ {\bf 69}\ (2019),  3017--3064.
	\bibitem{HL19b} F.R.\ Harvey and H.B. Lawson, Jr., {\em Plurisubharmonics in general potential theories}, in {\em Advances in complex geometry}, 145--168, Contemp.\ Math.\ {\bf 735}, Amer.\ Math.\ Soc., Providence, RI, 2019.  
	\bibitem{HL20a} F.R.\ Harvey and H.B. Lawson, Jr., {\em A generalization of PDEs from a Krylov point of view}, Adv.\ Math.\ {\bf 372} \ (2020), 107298, 32 pp.
	\bibitem{HL21a} F.R.\ Harvey and H.B. Lawson, Jr., {\em Pseudoconvexity for the special Lagrangian potential equation}, Calc.\ Var.
	Partial Differential Equations, {\bf 60}\ (2021), Paper No.\ 6, 37 pp.
		\bibitem{HLP16} F.R.\ Harvey, H.B.\ Lawson, Jr. and S.\ Pli\'{s}, {\em Smooth approximation of plurisubharmonic functions on almost complex manifolds}, Math.\ Ann.\ {\bf 366}\ (2016), 929-–940.
	\bibitem{IL90} H.\ Ishii and P-L.\ Lions, {\em Viscosity solutions of fully nonlinear second-order elliptic partial differential equations}, J.\ Differential Equations {\bf 83}\ (1990), 26--78.
	\bibitem{Je88} R.\ Jensen, {\em The maximum principle for viscosity solutions of fully nonlinear second order partial differential equations}, Arch.\ Rat.\ Mech.\ Anal.\ {\bf 101}\ (1988), 1--27.
	\bibitem{Kv76} N.V.\ Krylov, {\em Sequences of convex functions, and estimates of the maximum of the solution of a parabolic equation}, Siberian Math.\ J.\ 17 (1976), 226–236.
	\bibitem{Kv87} N.V.\ Krylov, {\em Nonlinear elliptic and parabolic equations of second
	order},  Mathematics and its Applications (Soviet Series), Vol.\ 7, D.\ Reidel Publishing Co., Dordrecht, 1987.
	\bibitem{Kv95} N.V.\ Krylov, {\em On the general notion of fully nonlinear second-order elliptic equations}, Trans.\ Amer.\ Math.\ Soc.\ {\bf 347} (1995), 857--895.
	\bibitem{PR22} K.R.\ Payne and D.F.\ Redaelli, {\em Primer on quasiconvex functions in nonlinear potential theory}, prepint 2022. 
	\bibitem{Sl84} Z.\ Slodkowski, {\em The Bremermann-Dirichlet problem for q-plurisubharmonic functions}, Ann.\ Sc[uola Norm.\ Sup.\ Pisa Cl.\ Sci.\ (4), {\bf 11}\ (1984), {303--326}.
	\bibitem{Tr90} N.S.\ Trudinger, {\em The Dirichlet problem for the prescribed curvature equations}, Arch.\ Rational Mech.\ Anal.\ {\bf 111}\ (1990), 153--179.
	\bibitem{V} C.\ Villani, {\em Optimal transport. Old and new.}, Grundlehren der Mathematischen Wissenschaften, Vol.\ 338, Springer-Verlag, Berlin, 2009.
	\bibitem{W68} J.B.\ Walsh, {\em Continuity of envelopes of plurisubharmonic functions}, J.\ Math.\ Mech.\ {\bf 18} (1968/1969), 143--148.
	
\end{thebibliography}
\end{document}